\newtheorem{theorem}{Theorem}[section]
\newtheorem{proposition}[theorem]{Proposition}
\newtheorem{corollary}[theorem]{Corollary}
\newtheorem{lemma}[theorem]{Lemma}
\theoremstyle{definition}
\newtheorem{definition}[theorem]{Definition}
\newtheorem{problem}[theorem]{Problem}
\theoremstyle{remark}
\numberwithin{equation}{section}
\newcommand{\R}{\mathbb{R}}
\newcommand{\C}{\mathbb{C}}
\begin{document}

\title[Orthogonal Polynomials and Differential Operators]{Real-Root Preserving
Differential Operator Representations of Orthogonal Polynomials}

\author[D.~A.~Cardon]{David~A.~Cardon}
\address{Department of Mathematics, Brigham Young University, Provo, UT 84602, USA}
\email{cardon@math.byu.edu}

\author[E.~Sorensen]{Evan~Sorensen}
\email{esorensencapps@gmail.com}

\author[J.~C.~White]{Jason~C.~White}
\email{white.jason.c@gmail.com}

\date{July 17, 2017}

\subjclass[2010]{30C15, 47B38, 33C45.} \keywords{orthogonal polynomials,
differential operators, transformations preserving reality of zeros, Hermite
polynomials, Laguerre polynomials}

\begin{abstract}
In this paper, we study linear transformations of the form $T[x^n]=P_n(x)$
where $\{P_n(x)\}$ is an orthogonal polynomial system. Of particular
interest is understanding when these operators preserve real-rootedness in
polynomials. It is known that when the $P_n(x)$ are the Hermite polynomials
or standard Laguerre polynomials, the transformation $T$ has this property.
It is also known that the transformation $T[x^n]=H_n^{\alpha}(x)$, where
$H_n^{\alpha}(x)$ is the $n$th generalized Hermite Polynomial with real
parameter $\alpha$, has the differential operator representation
$T[x^n]=e^{-\frac{\alpha}{2}D^2}x^n$. The main result of this paper is to
prove that a differential operator of the form $\sum_{k=0}^\infty
\frac{\gamma_k}{k!} D^k$ induces a system of monic orthogonal polynomials
if and only if $\sum_{k=0}^\infty \frac{\gamma_k}{k!}
D^k=\gamma_0e^{-\frac{ \alpha}{2}D^2-\beta D}$ where $\gamma_0,\alpha,\beta
\in \C$ and $\alpha,\gamma_0 \neq 0$. This operator will produce a shifted
set of generalized Hermite polynomials when $\alpha \in \R$. We also
express the transformation from the standard basis to the standard Laguerre
basis, $T[x^n]=L_n(x)$ as a differential operator of the form
$\sum_{k=0}^\infty \frac{p_k(x)}{k!} D^k$ where the $p_k$ are polynomials,
an identity that has not previously been shown.
\end{abstract}

\maketitle

\section{Introduction}

Let $T:\C[x]\rightarrow \C[x]$ be a linear transformation such that for every
real-rooted polynomial $p(x)$, the polynomial $T[p(x)]$ has real roots. Such
transformations are of particular interest when studying the zeros of entire
functions. In recent years, transformations involving orthogonal polynomials
have been considered. We are interested in transformations $T$ with the
real-root preserving property and the additional condition that for all $n,
T[x^n] = P_n(x)$, where the set of $P_n(x)$ form an orthogonal polynomial
system.

Many of the ideas involving orthogonal polynomials are motivated by reading
and understanding the concepts in
Chihara~\cite{Chihara_Orthogonal_Polynomials}. In his book, the following
definition is given.

\begin{definition}\cite[p.11]{Chihara_Orthogonal_Polynomials}
\label{Orthogonal_Polynomials} A sequence $\{P_n(x)\}_{n=0}^\infty$ is called
an \textit{orthogonal polynomial sequence} with respect to a moment
functional $\mathcal{L}$ provided for all nonnegative integers $m$ and $n$,
\begin{enumerate}
\item
$P_n(x)$ is a polynomial of degree $n$,
\item
$\mathcal{L}[P_m(x)P_n(x)]=0$ for  $m \neq n$, and
\item
$\mathcal{L}[P_n^2(x)] \neq 0$.
\end{enumerate}
\end{definition}
In most important cases, condition (3) can be replaced by
$\mathcal{L}[P_n^2(x)] > 0$, but that is not required in the most general
setting.

We will abbreviate orthogonal polynomial sequences by writing OPS in the
singular and plural senses. In the case that all polynomials in the set are
monic, we will call the set a monic OPS. One significant property of OPS is
that they follow a three-term recurrence relation. We summarize Theorems~4.1
and~4.4 of Chapter~1 in Chihara~\cite{Chihara_Orthogonal_Polynomials} as
follows. The referenced Theorem~4.4 is commonly known as Favard's Theorem.

\begin{theorem}
\cite[Thms. 4.1, 4.4, p. 18-22]{Chihara_Orthogonal_Polynomials}
\label{three-term_recurrence} $\{P_n(x)\}_{n=0}^\infty$ is a monic OPS if and
only if there exist sequences of constants $\{c_n\}$ and $\{\lambda_n\neq
0\}$ such that
\begin{equation} \label{monic_recurrence}
P_n(x)=(x-c_n)P_{n-1}(x)-\lambda_n P_{n-2}(x) \ \ \ \ n \geq 1
\end{equation}
\noindent where $P_0(x)=1$ and we define  $P_{-1}(x)=0$.
\end{theorem}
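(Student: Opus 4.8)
The plan is to prove the two implications separately. The forward direction (a monic OPS satisfies such a recurrence) is routine linear algebra combined with orthogonality; the real content is the converse, where one must \emph{construct} a moment functional out of the given coefficients and then verify orthogonality, and that verification is where I expect the only genuine difficulty.

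For the forward direction, suppose $\{P_n\}_{n=0}^\infty$ is a monic OPS with respect to $\mathcal{L}$. Since $\deg P_k=k$, the polynomials $\{P_0,\dots,P_n\}$ form a basis for the polynomials of degree at most $n$, so I would expand the monic degree-$n$ polynomial $xP_{n-1}$ as $xP_{n-1}=\sum_{k=0}^{n}a_k P_k$ with $a_n=1$. To identify the coefficients I pair with $P_j$ and use orthogonality: $a_j\,\mathcal{L}[P_j^2]=\mathcal{L}[(xP_{n-1})P_j]=\mathcal{L}[P_{n-1}(xP_j)]$. Because $xP_j$ has degree $j+1$ and $P_{n-1}$ is orthogonal to every polynomial of degree less than $n-1$, the right-hand side vanishes whenever $j+1<n-1$, forcing $a_j=0$ for $j\le n-3$. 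Hence only $a_{n-2}$, $a_{n-1}$, and $a_n=1$ survive, and rearranging gives $P_n=(x-c_n)P_{n-1}-\lambda_n P_{n-2}$ with $c_n=a_{n-1}$ and $\lambda_n=a_{n-2}$. Finally $xP_{n-2}=P_{n-1}+(\text{lower degree})$, so $a_{n-2}\,\mathcal{L}[P_{n-2}^2]=\mathcal{L}[P_{n-1}^2]$, yielding $\lambda_n=\mathcal{L}[P_{n-1}^2]/\mathcal{L}[P_{n-2}^2]\neq 0$ by condition (3) of Definition~\ref{Orthogonal_Polynomials}.

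For the converse, I am given $\{c_n\}$, $\{\lambda_n\neq 0\}$, and the polynomials $P_n$ defined by the recurrence, and I must exhibit a moment functional. The construction is to define $\mathcal{L}$ on the basis $\{P_n\}$ by $\mathcal{L}[P_0]=\mu_0$ for a fixed $\mu_0\neq 0$ and $\mathcal{L}[P_n]=0$ for $n\ge 1$, extended linearly; since each $P_n$ is monic of degree $n$ this determines the moments $\mathcal{L}[x^n]$ uniquely. The task is then to prove $\mathcal{L}[P_mP_n]=0$ for $m\neq n$ and $\mathcal{L}[P_n^2]\neq 0$. The key step is the orthogonality claim $\mathcal{L}[x^kP_n]=0$ for $0\le k<n$ (which immediately gives $\mathcal{L}[P_mP_n]=0$ for $m<n$). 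The main obstacle, and the thing I would be careful about, is that a naive induction on the degree $n$ stalls: reducing $\mathcal{L}[x^kP_n]$ through the recurrence produces boundary moments $\mathcal{L}[x^jP_m]$ with $j$ equal to or one above $\deg P_m$, and the adjacent case $\mathcal{L}[P_{n-1}P_n]$ in particular becomes circular. The fix is to induct instead on the \emph{power} $k$. Using the recurrence in the form $xP_n=P_{n+1}+c_{n+1}P_n+\lambda_{n+1}P_{n-1}$, one gets
\[
\mathcal{L}[x^{k+1}P_n]=\mathcal{L}[x^{k}P_{n+1}]+c_{n+1}\,\mathcal{L}[x^{k}P_n]+\lambda_{n+1}\,\mathcal{L}[x^{k}P_{n-1}],
\]
and for every $n\ge k+2$ all three terms on the right vanish by the inductive hypothesis $\mathcal{L}[x^kP_m]=0$ $(m\ge k+1)$, whose base case $k=0$ is exactly the defining relation $\mathcal{L}[P_m]=0$. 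This raises the power while keeping the indices $n+1,n,n-1$ inside the admissible range, so no circularity arises.

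It then remains to handle the diagonal. Since $P_n$ is monic and orthogonal to all lower-degree polynomials, $\mathcal{L}[P_n^2]=\mathcal{L}[x^nP_n]$, and expanding $x^nP_n=x^{n-1}(xP_n)$ by the same identity gives $\mathcal{L}[x^{n-1}P_{n+1}]=\mathcal{L}[x^{n-1}P_n]=0$, leaving the clean recursion $\mathcal{L}[P_n^2]=\lambda_{n+1}\,\mathcal{L}[P_{n-1}^2]$. Iterating from $\mathcal{L}[P_0^2]=\mu_0$ shows $\mathcal{L}[P_n^2]$ equals $\mu_0$ times a product of the $\lambda_j$, which is nonzero because each $\lambda_j\neq 0$ and $\mu_0\neq 0$; this establishes condition (3) and closes the argument. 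Since $\mathcal{L}[1]=\mu_0\neq 0$, the functional $\mathcal{L}$ is nontrivial, so $\{P_n\}$ is a monic OPS, completing the converse.
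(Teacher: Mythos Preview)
Your proof is correct and follows the classical route: linear-algebraic expansion plus orthogonality for the forward direction, and for the converse the construction of $\mathcal{L}$ on the basis $\{P_n\}$ followed by the induction on the power $k$ in $\mathcal{L}[x^kP_n]$ to establish orthogonality, with the diagonal handled by the telescoping identity $\mathcal{L}[P_n^2]=\lambda_{n+1}\mathcal{L}[P_{n-1}^2]$. One cosmetic remark: in the forward direction your formula $\lambda_n=\mathcal{L}[P_{n-1}^2]/\mathcal{L}[P_{n-2}^2]$ only applies for $n\ge 2$; for $n=1$ the term $\lambda_1 P_{-1}$ vanishes identically, so $\lambda_1$ is unconstrained and may be taken to be any nonzero constant (the paper remarks on exactly this point later).

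As for comparison with the paper: the paper does not give its own proof of this theorem. It is stated as a quotation of Theorems~4.1 and~4.4 from Chihara's book and used as background. Your argument is essentially the standard one found there, so there is no substantive divergence to report.
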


Note that the definition of orthogonal polynomials does not require the
system to be monic. In general, an OPS need not be monic, and the system
satisfies a recurrence of the form
\begin{equation} \label{Non-monic_recursion}
P_{n+1}(x)=(A_n x+B_n)P_n(x)-C_n P_{n-1}(x),
\end{equation}
with $A_n, C_n \neq 0$. Note that in this equation, the highest index is
shifted upward, as is standard in the literature for the non-monic case.

The above definition and equivalent recurrence relations give very general
definitions of orthogonal polynomials. It is often useful only to discuss
\textit{positive-definite} moment functionals, which ensure that the
orthogonal polynomials satisfy nice properties, such as the interlacing of
roots of successive polynomials. This also allows the moment functional to be
used as an inner product on the space of polynomials. Positive-definite
moment functionals will be discussed in more depth near the end of Section~2.

We will also make frequent reference to the differential operator $D$ in this
paper. $D$ represents differentiation with respect to $x$, so for a $k$-times
differentiable function  $f\colon\C \rightarrow \C$, we have  $D^k
f(x)=f^{(k)}(x)$.

A known result about linear transformations is included in
Piotrowski~\cite{Piotrowski_PhD}, which we will include here for convenience.

\begin{proposition} \label{Piotrowski_differential_operators}
\cite[Prop.~29, p.32]{Piotrowski_PhD}  Let $T\colon \mathbb{C}[x] \rightarrow \mathbb{C}[x]$ be a linear operator. Then, there exists a unique set of complex polynomials $\{p_k(x)\}_{k=0}^\infty$ such that
\[
T[f(x)]=\bigg(\sum_{k=0}^\infty \frac{p_k(x)}{k!} D^k \bigg)f(x)
\]
for all $f(x) \in \mathbb{C}[x]$.
\end{proposition}
We will note here that the expression given by Piotrowski does not include
the $k!$ expression. Since this only multiplies each polynomial by a scalar,
the statement is still true and will be useful in performing computations.

We will study the differential operator representations of the form above. We
hope that examining this for known transformations that preserve
real-rootedness will give insight into knowing about general transformations
that preserve real-rootedness and give an OPS.


\section{Differential Operators of the form \texorpdfstring{$\sum\limits_{k=0}^{\infty} \frac{\gamma_k}{k!}D^k$}{} }

The Hermite polynomials $\{H_n(x)\}$ play many important roles in physics,
probability, and numerical analysis, and they are discussed at length by
Piotrowski~\cite{Piotrowski_PhD}. They follow the recurrence relation
\begin{align*}
H_{n+1}(x) &= 2x H_{n}-2n H_{n-1}(x) \\
H_0(x) &= 1 \\
H_{-1}(x) &= 0 \\
\intertext{and can be expressed as}
H_n(x) &=2^n e^{-\frac{D^2}{4}}x^n.
\end{align*}
The Hermite polynomials can also be generalized by a real parameter $\alpha$
and satisfy the recurrence relation
\begin{align} \label{general_Hermite_recurrence}
H_n^\alpha(x) &= x H_{n-1}^\alpha(x)-\alpha(n-1)H_{n-2}^\alpha(x) \\ \nonumber
H_0^\alpha(x) &= 1 \\ \nonumber
H_{-1}^\alpha(x) &= 0.
\end{align}
Also, they can be related to the Hermite polynomials in the following way,
where $\alpha \neq 0$,
\[
H_n^{\alpha}(x)=\Big(\frac{\alpha}{2}\Big)^{n/2} H_n\Big(\frac{x}{\sqrt{2\alpha}}\Big).
\]
Furthermore, they can be represented by the differential operator
 \[
H_n^{\alpha}(x)=e^{-\frac{\alpha}{2} D^2}x^n.
\]

It is worth noting that this representation shows that for $\alpha \geq 0$,
the transformation is a real-root preserver. To see this, we introduce the
following class of functions.
\begin{definition}
The \textit{Laguerre-P\'olya class}, denoted by $\mathcal{LP}$, is the set
of functions obtained as uniform limits on compact sets of real polynomials
with real roots. They have the Weierstrass product representation
\[
cz^ne^{-\alpha z^2 + \beta z}\prod_{k=1}^{\infty}(1-\frac{z}{a_k})e^{\frac{z}{a_k}}\,,
\]
where $c, \, \alpha, \, \beta, \, a_k$ are real, $n$ is a non-negative
integer, $\alpha >0$, and $\sum_{k=1}^{\infty}|a_k|^{-2} < \infty$.
\end{definition}

From the Weierstrass product representation we see that $\phi(z) =
e^{-\frac{\alpha}{2}z^2} \in \mathcal{LP}$ for $\alpha > 0$. Then, as the
differential operators will only act on polynomials in this paper, the
following well-known theorem, originally proved by P\'olya, will suffice to
show that the transformation to the generalized Hermite polynomials (when
$\alpha > 0)$ is a real-root preserver.
\begin{theorem}\cite[Thm.~5.4.13, p.~157]{Rahman_and_Schmeisser} \label{LP-real-rootedness}
Assume
\[
\phi(z) = \sum_{k=0}^{\infty} a_kz^k \in \mathcal{LP}.
\]
Then, if $f(z)$ is a real polynomial with real-roots, $\phi(D)f(z)$ is also a
real polynomial with real roots.
\end{theorem}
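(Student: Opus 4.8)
The plan is to deduce the statement from the classical Hermite--Poulain theorem together with a limiting argument built directly on the definition of $\LP$. Since $f$ is a polynomial of some degree $d$, the operator acts as the \emph{finite} sum $\phi(D)f = \sum_{k=0}^{d} a_k f^{(k)}$, so there is no convergence issue in applying $\phi(D)$; the entire content of the theorem concerns the location of the zeros. First I would record that, because $\phi$ is a locally uniform limit of real polynomials with only real zeros, its Taylor coefficients $a_k$ are real, and hence $\phi(D)f$ is a real polynomial whenever $f$ is.

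The heart of the argument is a single-factor lemma: if $g$ is a real polynomial with only real zeros and $r \in \R$, then $(D-r)g = g'-rg$ again has only real zeros. I would prove this by a Rolle-type count. Set $h(x)=e^{-rx}g(x)$, so that $h'(x)=e^{-rx}(g'(x)-rg(x))$ and the real zeros of $g'-rg$ coincide with those of $h'$. If $g$ has degree $n$ with distinct real zeros $y_1<\cdots<y_p$ of multiplicities $m_1,\dots,m_p$, then $h'$ has a zero of multiplicity $m_i-1$ at each $y_i$, and Rolle's theorem supplies a further zero strictly between each consecutive pair; these account for $\sum_i(m_i-1)+(p-1)=n-1$ real zeros of $h'$ counted with multiplicity. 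When $r\neq 0$ the factor $e^{-rx}$ forces $h\to 0$ at exactly one end of the real line, producing one additional critical point beyond the extreme zero of $g$, for a total of $n$ real zeros of the degree-$n$ polynomial $g'-rg$; when $r=0$ the claim reduces to the familiar fact that the derivative of a real-rooted polynomial is real-rooted.

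With the lemma in hand, Hermite--Poulain follows immediately: a real polynomial with only real zeros factors as $g(z)=c\prod_{j=1}^{n}(z-r_j)$ with $c,r_j\in\R$, so $g(D)=c\prod_{j=1}^{n}(D-r_j)$ is a commuting product of factors each of which preserves real-rootedness (or yields the zero polynomial), whence $g(D)$ maps real-rooted polynomials to real-rooted polynomials. Finally I would pass to the limit. By definition of $\LP$ there exist real polynomials $g_m$ with only real zeros such that $g_m\to\phi$ uniformly on compact sets; writing $g_m(z)=\sum_k b_{m,k}z^k$, Cauchy's coefficient estimates give $b_{m,k}\to a_k$ for each fixed $k$. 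Consequently $g_m(D)f=\sum_{k=0}^{d}b_{m,k}f^{(k)}\to\sum_{k=0}^{d}a_k f^{(k)}=\phi(D)f$, and since each $g_m(D)f$ has only real zeros, Hurwitz's theorem shows the limit $\phi(D)f$ is either identically zero or has only real zeros.

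I expect the main obstacle to be the single-factor step, specifically the bookkeeping of multiplicities so that the Rolle zeros, the zeros inherited from repeated roots of $g$, and the extra critical point coming from the decay of $h$ at infinity are all counted without overlap, thereby exhausting the $n$ zeros of the degree-$n$ polynomial $g'-rg$. The assembly into Hermite--Poulain and the Hurwitz limiting argument are then routine.
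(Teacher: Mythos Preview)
The paper does not prove this theorem: it quotes it as a classical result due to P\'olya, cites Rahman and Schmeisser for a reference, and then uses it as a black box. There is therefore nothing in the paper to compare your argument against.

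That said, your outline is the standard proof and is correct. The Hermite--Poulain single-factor step via the Rolle count on $h(x)=e^{-rx}g(x)$ is sound---your bookkeeping $\sum_i(m_i-1)+(p-1)=n-1$ together with the extra critical point coming from the one-sided decay of $h$ when $r\neq 0$ accounts for all $n$ zeros of the degree-$n$ polynomial $g'-rg$---and composition of commuting factors then gives the polynomial case $g(D)$. The passage to general $\phi\in\LP$ by coefficientwise convergence and Hurwitz is routine because $\phi(D)f$ depends only on the finitely many Taylor coefficients $a_0,\dots,a_d$; your remark that the limit may be identically zero is the only caveat needed, and it does not arise in the paper's applications since there $a_0\neq 0$ and the leading term of $f$ survives.
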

For a more detailed presentation of the Laguerre-P\'olya class, as well as
the effect of various linear operators on the location of zeros, we highly
recommend chapters VIII and XI of Levin's book \cite{Levin}.

The operator $e^{-\frac{\alpha}{2}D^2}$ can be written as $\sum_{k=0}^\infty
\frac{(-\frac{\alpha}{2}D^2)^k}{k!}$, so in the representation of the linear
transformation given in Proposition~\ref{Piotrowski_differential_operators},
all of the $p_k(x)$ are constants. This raises the question of classifying
all such transformations to orthogonal polynomials that have the differential
operator representation in Proposition~\ref{Piotrowski_differential_operators} with the $p_k(x)$ constant. This brings us to the main result of
this paper.


\begin{theorem} \label{Main_Theorem}
Using the function $\phi(x)=\sum\limits_{k=0}^\infty \frac{\gamma_k}{k!}x^k$
as a differential operator, $\phi(D)x^n=P_n(x)$  gives an OPS if and only if
$\phi(x)=\gamma_0e^{-\frac{\alpha}{2}x^2-\beta x}$  with $\alpha, \beta,
\gamma_0 \in \C$ and $\alpha,\gamma_0 \neq 0$. Furthermore, the $P_n(x)$
satisfy the recurrence relation
    \begin{align*}
    P_n(x) & = (x-\beta)P_{n-1}(x)-\alpha(n-1)P_{n-2}(x) \\
        P_0(x) & = \gamma_0 \\
        P_{-1}(x) & = 0.
\end{align*}

\end{theorem}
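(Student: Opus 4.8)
The plan is to extract one structural fact that drives both directions: because $\phi(D)$ is a power series in $D$ alone, it commutes with $D$, so the sequence $P_n(x)=\phi(D)x^n$ is an \emph{Appell sequence}. Concretely,
\[
P_n'(x)=D\phi(D)x^n=\phi(D)Dx^n=n\,\phi(D)x^{n-1}=nP_{n-1}(x).
\]
Expanding $P_n(x)=\sum_{k=0}^{n}\binom{n}{k}\gamma_k x^{n-k}$ shows that every $P_n$ has leading coefficient $\gamma_0$; hence if $\{P_n\}$ is an OPS then $\deg P_n=n$ forces $\gamma_0\neq0$, and the monic renormalizations $\hat P_n:=P_n/\gamma_0$ satisfy $\hat P_0=1$ and the same Appell relation $\hat P_n'=n\hat P_{n-1}$.

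For the (only if) direction I would feed the Appell relation into the three-term recurrence supplied by Favard's Theorem~\ref{three-term_recurrence}. Since the $\hat P_n$ are monic, they obey $\hat P_n=(x-c_n)\hat P_{n-1}-\lambda_n\hat P_{n-2}$ with $\lambda_n\neq0$ for $n\ge2$. Differentiating this identity and substituting $\hat P_m'=m\hat P_{m-1}$ everywhere collapses it, after dividing by $n-1$, to
\[
\hat P_{n-1}=(x-c_n)\hat P_{n-2}-\tfrac{(n-2)\lambda_n}{n-1}\hat P_{n-3}.
\]
Comparing this with the recurrence written at index $n-1$ and using the linear independence of $\hat P_{n-2}$ and $\hat P_{n-3}$, I would match coefficients to conclude $c_n=c_{n-1}$ and $\lambda_{n-1}=\tfrac{n-2}{n-1}\lambda_n$ for all $n$ large enough; the small-index cases ($n=1,2$) are handled separately by the same differentiation trick starting from $\hat P_1=x-c_1$. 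These relations force $c_n$ to equal a constant $\beta$ and $\lambda_n/(n-1)$ to equal a constant $\alpha$, i.e.\ $\lambda_n=\alpha(n-1)$, with $\alpha\neq0$ because $\lambda_2\neq0$. Restoring the factor $\gamma_0$ yields exactly the claimed recurrence.

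To finish the (only if) direction I would convert the recurrence back into a statement about $\phi$. The operator $e^{-\beta D}$ is the translation $f(x)\mapsto f(x-\beta)$, so $\gamma_0e^{-\frac{\alpha}{2}D^2-\beta D}x^n=\gamma_0e^{-\frac{\alpha}{2}D^2}(x-\beta)^n=\gamma_0 H_n^{\alpha}(x-\beta)$, and the generalized Hermite recurrence~\eqref{general_Hermite_recurrence} shows these shifted polynomials satisfy precisely the recurrence just derived. Since that recurrence together with $P_0=\gamma_0$, $P_{-1}=0$ determines the $P_n$ uniquely, we obtain $\phi(D)x^n=\gamma_0e^{-\frac{\alpha}{2}D^2-\beta D}x^n$ for every $n$; as these operators agree on the basis $\{x^n\}$, they agree on $\C[x]$, and matching Taylor coefficients gives $\phi(x)=\gamma_0e^{-\frac{\alpha}{2}x^2-\beta x}$. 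The (if) direction is then the same computation read in reverse: the identity $P_n(x)=\gamma_0 H_n^{\alpha}(x-\beta)$ exhibits the recurrence directly from~\eqref{general_Hermite_recurrence}, and Favard's Theorem~\ref{three-term_recurrence} certifies that the monic $\hat P_n$, having $\lambda_n=\alpha(n-1)\neq0$ for $n\ge2$, form an OPS (the value $\lambda_1$ is immaterial, since it multiplies $P_{-1}=0$).

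I expect the main obstacle to be the bookkeeping in the (only if) direction: cleanly matching coefficients after differentiating the recurrence while correctly disposing of the low-index base cases, and reconciling the degenerate value $\lambda_1=0$ with the nonvanishing hypothesis in Favard's theorem. The remaining steps are either the commuting-operator identity or a direct verification.
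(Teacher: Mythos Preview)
Your proof is correct and takes a genuinely different route from the paper. The paper proceeds by brute-force coefficient matching: it writes $P_n(x)=\sum_k\binom{n}{k}\gamma_{n-k}x^k$, substitutes into the three-term recurrence, and compares the $x^{n-1}$ and $x^{n-2}$ coefficients to extract that $c_n$ and $\lambda_n/(n-1)$ are constant (Lemma~\ref{constants_lemma}); it then shows these coefficient recursions on the $\gamma_n$ are equivalent to $\phi$ satisfying the ODE $\phi''+(ax+b)\phi'+a\phi=0$ (Lemma~\ref{differential_equation}) and appeals to uniqueness of the initial-value problem to pin down $\phi$. Your approach replaces all of this with the single structural observation that $P_n'=nP_{n-1}$ (the Appell property, coming from $[\phi(D),D]=0$), and then differentiates the Favard recurrence once to force $c_n=c_{n-1}$ and $\lambda_{n-1}/(n-2)=\lambda_n/(n-1)$. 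This is cleaner and more conceptual: it explains \emph{why} the recurrence coefficients must take this rigid form (the sequence is simultaneously an OPS and Appell), whereas the paper's computation verifies it without illuminating it. The paper's ODE step has the minor advantage of identifying $\phi$ intrinsically from its own Taylor data, while you identify it by matching the recurrence against the known one for shifted generalized Hermite polynomials and invoking uniqueness of the differential-operator representation; both are valid, and yours requires less bookkeeping.
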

For $\phi(x)$ as defined above, if we apply the differential operator
$\phi(D)$ to $x^n$ for all $n$, we can take
\begin{align*}
P_n(x)=\phi(D)[x^n]&=\Big(\sum\limits_{k=0}^\infty \frac{\gamma_k}{k!}D^k\Big)[x^n]
=\sum\limits_{k=0}^n \frac{\gamma_k}{k!} n(n-1) \ldots  (n-k+1)x^{n-k}
\\
&=\sum\limits_{k=0}^n \gamma_k \tbinom{n}{n-k} x^{n-k}
=\sum\limits_{k=0}^n \gamma_{n-k} \tbinom{n}{k} x^k \text{ for } n \geq 0.
\end{align*}
Notice that the leading term for each polynomial is
$\gamma_0$. From Definition~\ref{Orthogonal_Polynomials}, we know that we
must have $\gamma_0 \neq 0$ to ensure that each $P_n(x)$ has degree $n$.

We now prove two lemmas, which will allow us to prove Theorem~\ref{Main_Theorem}.

\begin{lemma} \label{constants_lemma}
Let $\phi(x)=\sum_{k=0}^\infty \frac{\gamma_k}{k!}x^k$ and
$\phi(D)x^n=P_n(x)$ for all $n$. The following are equivalent:
\begin{enumerate}
\item $\{P_n(x)\}_{n=0}^\infty$ is an OPS.
\item For $n \geq 1$, the set $\{P_n(x)\}_{n=0}^\infty$ follows the
    recurrence relation given in Equation~\eqref{monic_recurrence}, with
    $c_n$ and $\frac{\lambda_n}{n-1}\neq 0$ constant for all $n \geq 1$.
    Also, $P_0(x)=\gamma_0 \neq 0$, and again, we define $P_{-1}(x) = 0$.
\item For $n \geq 1$, the $\gamma_n$ defined above satisfy the recursion
    relation
\[
\gamma_n  = -b\gamma_{n-1}-a(n-1)\gamma_{n-2},
\]
where $b \in \C$ and $a \neq 0$ are the constants corresponding to $c_n$ and
$\frac{\lambda_n}{n-1},$ respectively. Also, we define $\gamma_{-1} = 0$, and
we have $\gamma_0 \neq 0$.
\end{enumerate}
\end{lemma}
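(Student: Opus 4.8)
The plan is to route every implication through one structural observation: because $\phi(D)$ is a power series in $D$ alone, it commutes with $D$, so the sequence $\{P_n\}$ is Appell. Explicitly, $P_n'(x) = D\phi(D)x^n = \phi(D)Dx^n = n\,\phi(D)x^{n-1} = nP_{n-1}(x)$, and I would record this identity $P_n' = nP_{n-1}$ at the outset since it is the engine behind the nontrivial directions. I would also note the elementary fact $P_n(0)=\gamma_n$, read off from the expansion $P_n(x)=\sum_{k=0}^n\binom{n}{k}\gamma_{n-k}x^k$ derived above (only the $k=0$ term survives), because evaluation at the origin is how polynomial identities get converted into scalar recursions on the $\gamma_n$.

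For $(1)\Leftrightarrow(2)$ I would pass to the monic rescaling $P_n/\gamma_0$, which is legitimate since $\gamma_0\neq 0$ and scaling every polynomial by a common nonzero constant preserves both orthogonality and the degree condition. Favard's Theorem (Theorem~\ref{three-term_recurrence}) then says $\{P_n\}$ is an OPS precisely when $P_n = (x-c_n)P_{n-1} - \lambda_n P_{n-2}$ with $\lambda_n\neq 0$; the extra content of $(2)$ is that $c_n$ and $\lambda_n/(n-1)$ are constant. To extract that constancy from orthogonality, i.e.\ the direction $(1)\Rightarrow(2)$, I would differentiate the three-term recurrence and substitute the Appell relations $P_n'=nP_{n-1}$, $P_{n-1}'=(n-1)P_{n-2}$, $P_{n-2}'=(n-2)P_{n-3}$; dividing by $n-1$ yields a second three-term expression for $P_{n-1}$ in terms of $P_{n-2}$ and $P_{n-3}$. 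Subtracting the genuine recurrence for $P_{n-1}$ and invoking linear independence of polynomials of distinct degrees forces $c_n=c_{n-1}$ and $\lambda_{n-1}=\frac{n-2}{n-1}\lambda_n$, so that $\lambda_n/(n-1)$ is independent of $n$. The reverse direction $(2)\Rightarrow(1)$ is immediate from Favard, since $\lambda_n=a(n-1)\neq 0$ for $n\geq 2$.

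The equivalence $(2)\Leftrightarrow(3)$ is the computational bridge. For $(2)\Rightarrow(3)$ I would simply evaluate the recurrence $P_n=(x-b)P_{n-1}-a(n-1)P_{n-2}$ at $x=0$ and use $P_n(0)=\gamma_n$ to obtain $\gamma_n=-b\gamma_{n-1}-a(n-1)\gamma_{n-2}$. For the converse $(3)\Rightarrow(2)$ I would set $Q_n(x)=P_n(x)-(x-b)P_{n-1}(x)+a(n-1)P_{n-2}(x)$ and show $Q_n\equiv 0$ by induction: differentiating and applying the Appell relation collapses $Q_n'$ to $(n-1)Q_{n-1}$, so once $Q_{n-1}\equiv 0$ the polynomial $Q_n$ is constant, and that constant is $Q_n(0)=\gamma_n+b\gamma_{n-1}+a(n-1)\gamma_{n-2}=0$ by hypothesis $(3)$.

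I anticipate the main obstacle to be the direction $(1)\Rightarrow(2)$: orthogonality alone, via Favard, only supplies abstract coefficients $c_n,\lambda_n$, and it is precisely their interaction with the Appell structure that rigidifies them to the form $c_n=b$, $\lambda_n=a(n-1)$. Care will be needed at the low-index edge cases, where $P_{-1}=0$ causes terms to drop out and the factor $1/(n-1)$ is undefined; I would therefore treat $n=1,2$ separately, establish constancy for $n\geq 3$, and check that the base cases are consistent with the conventions $P_{-1}=0$ and $\gamma_{-1}=0$.
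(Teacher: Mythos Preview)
Your proposal is correct and takes a genuinely different route from the paper's proof. The paper never isolates the Appell identity $P_n'=nP_{n-1}$; instead it works entirely with the explicit coefficient expansion $P_n(x)=\sum_{k=0}^n\binom{n}{k}\gamma_{n-k}x^k$. For $(1)\Rightarrow(2)$ the paper substitutes this expansion into the three-term recurrence and compares the $x^{n-1}$ and $x^{n-2}$ coefficients on each side to pin down $c_n$ and $\lambda_n/(n-1)$; for $(1)\Rightarrow(3)$ it compares constant terms; and for $(3)\Rightarrow(2)$ it performs a rather lengthy direct computation, rewriting $P_n$ via the $\gamma$-recursion and massaging binomial coefficients until the right-hand side $(x-b)P_{n-1}-a(n-1)P_{n-2}$ appears. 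Your differentiation-based argument is cleaner in both nontrivial directions: for $(1)\Rightarrow(2)$ it replaces coefficient bookkeeping by a single comparison of two three-term relations for $P_{n-1}$, and for $(3)\Rightarrow(2)$ the induction on $Q_n$ via $Q_n'=(n-1)Q_{n-1}$ and $Q_n(0)=0$ avoids all the binomial identities. The paper's approach, on the other hand, is more self-contained in that it never needs to observe that $\phi(D)$ commutes with $D$; it stays at the level of polynomial coefficients throughout, which may be preferable if one wants to generalize to operators not built from $D$ alone. Your handling of the edge cases $n=1,2$ is adequate; you should just make explicit that differentiating the level-$2$ recurrence already yields $c_1=c_2$, so that constancy of $c_n$ holds for all $n\geq 1$, while constancy of $\lambda_n/(n-1)$ is only meaningful and only needed for $n\geq 2$.
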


A minor, but important note to make is that for $n = 1,
\frac{\lambda_n}{n-1}$ is undefined. However, in the recursion equation,
$\lambda_1$ is multiplied by $\gamma_{-1} = 0$, so we can choose $\lambda_1$
arbitrarily.

It is also rather important to note that in Theorem~\ref{Main_Theorem} and
Lemma~\ref{constants_lemma}, we do not assume that the OPS is monic. This
suggests that we need to prove that the above conditions are equivalent to
the corresponding system $\{P_n(x)\}_{n=0}^\infty$ satisfying the recursion
relation given in Equation~\eqref{Non-monic_recursion}, included below:
\[
P_{n+1}(x)=(A_nx+B_n)P_n(x)-C_nP_{n-1}(x).
\]
However, from our discussion immediately following the statement of
Theorem~\ref{Main_Theorem}, we showed that each of the $P_n(x)$ must have the
same leading term $\gamma_0$ in this case. This requires the $A_n$ in the
previous equation to be $1$ for all $n$. Therefore, it will suffice to show
that the above conditions are equivalent to the system satisfying the
recurrence of the form given in Equation~\eqref{monic_recurrence}, which we
include again here for convenience:
\[
P_n(x)=(x-c_n)P_{n-1}(x)-\lambda_nP_{n-2}(x),
\]
where we define $P_{-1}(x)=0$, but the only restriction on $P_0(x)=\gamma_0$
is that it is nonzero. Thus, in the case that $\gamma_0=1$, we will have a
monic OPS.

\begin{proof}[Proof of Lemma~\ref{constants_lemma}]
We will prove (1) $\Leftrightarrow$ (2), (1) $\Rightarrow$ (3), and (3)
$\Rightarrow$ (2). Note that (2) $\Rightarrow$ (1) is trivial by
Theorem~\ref{three-term_recurrence} and Equation~\eqref{Non-monic_recursion},
so we only need to prove that $c_n$ and $\frac{\lambda_n}{n-1}$ must be
constant given a monic OPS is induced by $\phi(D)$. From our remark above,
\[
P_n(x)=\sum\limits_{k=0}^n \gamma_{n-k} \binom{n}{k} x^k
\]
so to satisfy the three-term recurrence for an OPS, we must have that, for $n
\geq 2$,
\begin{equation} \label{recurrence:gamma_k}
\sum\limits_{k=0}^n \gamma_{n-k} \tbinom{n}{k} x^k=(x-c_n)\sum\limits_{k=0}^{n-1} \gamma_{n-1-k} \tbinom{n-1}{k} x^k-\lambda_n\sum\limits_{k=0}^{n-2} \gamma_{n-2-k} \tbinom{n-2}{k} x^k.
\end{equation}
In the case that $n = 1 $, defining $P_{-1} = 0$ gives us
\begin{equation} \label{recurrence:gamma_k:n=1:case}
\gamma_1 +\gamma_0 x = (x-c_1)\gamma_0.
\end{equation}
Comparing the coefficients of $x^{n-1}$ on each side of
Equation~\eqref{recurrence:gamma_k}, we get
\[
\gamma_1\tbinom{n}{n-1}x^{n-1}=\gamma_1\tbinom{n-1}{n-2}x^{n-1}-c_n \gamma_0\tbinom{n-1}{n-1}x^{n-1},
\]
so $n\gamma_1=(n-1)\gamma_1-c_n\gamma_0$ and $\gamma_1=-c_n\gamma_0.$ Now
note this calculation was independent of $n$, so $c_n$ must be constant. The
case of $n = 1$ gives the same result, simply by examining
Equation~\eqref{recurrence:gamma_k:n=1:case}. We will further denote $c_n$ as
$b$.

Now, we compare the coefficients of $x^{n-2}$ in Equation~\eqref{recurrence:gamma_k}.
\[
\gamma_2\tbinom{n}{n-2}x^{n-2}=\gamma_2\tbinom{n-1}{n-3}x^{n-2}-c_n\gamma_1\tbinom{n-1}{n-2}x^{n-2}-\lambda_n\gamma_0\tbinom{n-2}{n-2}x^{n-2},
\]
so
\[
\frac{n(n-1)}{2}\gamma_2=\frac{(n-1)(n-2)}{2}\gamma_2+b^2(n-1)-\lambda_n\gamma_0.
\]
Solving for $\gamma_2$ yields $\gamma_2=b^2-\frac{\lambda_n}{n-1}\gamma_0$.
Recall that we can choose $\lambda_1$ to be arbitrary. Again noting that this
calculation was independent of $n$, $\frac{\lambda_n}{n-1}$ must be constant,
which we will denote as $a$.

Now, to prove (1) $\Rightarrow$ (3), compare the constant terms from
Equation~\eqref{recurrence:gamma_k} to get the recurrence
\[
\gamma_n=-c_n\gamma_{n-1}-\lambda_n\gamma_{n-2}=-b\gamma_{n-1}-a(n-1)\gamma_{n-2}\ ,
\]
for $n \geq 2$, as desired. The case of $n = 1$ comes trivially from
Equation~\eqref{recurrence:gamma_k:n=1:case} by defining $\gamma_{-1} = 0$.
The condition that $\gamma_0 \neq 0$ has been discussed previously.

Now, to prove (3) $\Rightarrow$ (2), assume the three-term recurrence for
$\gamma_n$ holds for all $n \geq 1$ with $a \neq 0$ and $b$ as constants.
Also assume $\gamma_0 \neq 0$, and set $\gamma_{-1}=0$. As given above,
$P_n(x)=\sum\limits_{k=0}^n \gamma_{n-k} \binom{n}{k} x^k$. We can also write
this sum as $\sum\limits_{k=0}^{n} \gamma_{k} \binom{n}{n-k} x^{n-k}$. Then,
\begin{align}
P_n(x)& = \gamma_0x^n+\sum\limits_{k=1}^{n} (-b\gamma_{k-1}-a(k-1)\gamma_{k-2})\tbinom{n}{n-k}x^{n-k} \nonumber \\
& =\gamma_0x^n-b\sum\limits_{k=1}^{n} \gamma_{k-1} \tbinom{n}{n-k}x^{n-k}-a\sum\limits_{k=2}^n \gamma_{k-2}(k-1)\tbinom{n}{n-k}x^{n-k}
\label{Formula_for:P_n} \nonumber \\
& =\gamma_0x^n-b\sum\limits_{k=0}^{n-1} \gamma_k \tbinom{n}{n-k-1}x^{n-k-1}-a\sum\limits_{k=0}^{n-2} \gamma_k(k+1) \tbinom{n}{n-k-2}x^{n-k-2}.
\end{align}
Now note the following observations:
\begin{align*}
\tbinom{n}{n-k-1} &= \tfrac{n!}{(n-k-1)!(k+1)!}=\tfrac{n(n-1)!}{(n-k-1)!k!(k+1)} = \tbinom{n-1}{n-k-1}\frac{n}{k+1}  \\
\tbinom{n}{n-k-2}(k+1)& = \tfrac{n!(k+1)}{(n-k-2)!(k+2)!}=\tfrac{n(n-1)(n-2)!}{(n-k-2)!(k+2)k!} = \tbinom{n-2}{n-k-2}\frac{n(n-1)}{k+2}.
\end{align*}
Next, combining these observations with~\eqref{Formula_for:P_n}, we can
rewrite the expression for $P_n(x)$ as
\begin{align*}
&\gamma_0x^n-b\sum\limits_{k=0}^{n-1}\gamma_k\tbinom{n-1}{n-k-1}x^{n-k-1}+b\sum\limits_{k=0}^{n-1} \gamma_k\tbinom{n-1}{n-k-1}\big(1-\tfrac{n}{k+1}\big)x^{n-k-1} \\
&\qquad\qquad-a(n-1)\sum\limits_{k=0}^{n-2} \gamma_k \tbinom{n-2}{n-k-2}x^{n-k-2} +a\sum\limits_{k=0}^{n-2} \gamma_k \tbinom{n-2}{n-k-2}\big(n-1-\tfrac{n(n-1)}{k+2}\big)x^{n-k-2} \\
& =\gamma_0x^n-bP_{n-1}(x)-a(n-1)P_{n-2}(x)
+b\sum\limits_{k=0}^{n-1} \gamma_k\tbinom{n-1}{n-k-1}\big(1-\tfrac{n}{k+1}\big)x^{n-k-1} \\
 & \qquad\qquad +a\sum\limits_{k=0}^{n-2} \gamma_k \tbinom{n-2}{n-k-2}\big(n-1-\tfrac{n(n-1)}{k+2}\big)x^{n-k-2}.
\end{align*}
Since we are trying to prove that the sequence of polynomials satisfies the
three-term recurrence, it now suffices to show that
\[
x P_{n-1}(x) = \gamma_0x^n+b\sum\limits_{k=0}^{n-1} \gamma_k\tbinom{n-1}{n-k-1}\big(1-\tfrac{n}{k+1}\big)x^{n-k-1} +a\sum\limits_{k=0}^{n-2} \gamma_k \tbinom{n-2}{n-k-2}\big(n-1-\tfrac{n(n-1)}{k+2}\big)x^{n-k-2},
\]
which is equivalent to showing
\begin{align} \label{P_n-1}
P_{n-1}(x) = \gamma_0x^{n-1}& +b\sum\limits_{k=0}^{n-1} \gamma_k\tbinom{n-1}{n-k-1}\big(1-\tfrac{n}{k+1}\big)x^{n-k-2} \\ \nonumber
&+a\sum\limits_{k=0}^{n-2} \gamma_k \tbinom{n-2}{n-k-2}\big(n-1-\tfrac{n(n-1)}{k+2}\big)x^{n-k-3}.
 \end{align}
Now, consider the following calculation.
\begin{align*}
&\gamma_0x^{n-1}+b\sum\limits_{k=0}^{n-1} \gamma_k\tbinom{n-1}{n-k-1}\big(1-\tfrac{n}{k+1}\big)x^{n-k-2} = \gamma_0x^{n-1} + b\sum\limits_{k=0}^{n-2} \gamma_k\tbinom{n-1}{n-k-1}\big(1-\tfrac{n}{k+1}\big)x^{n-k-2} \\
&=\gamma_0x^{n-1}+b\gamma_0(1-n)x^{n-2}+b\sum\limits_{k=1}^{n-2} \gamma_k\tbinom{n-1}{n-k-1}\big(1-\tfrac{n}{k+1}\big)x^{n-k-2} \\
&=\gamma_0x^{n-1}+b\gamma_0(1-n)x^{n-2}+b\sum\limits_{k=2}^{n-1} \gamma_{k-1}\tbinom{n-1}{n-k}\big(1-\tfrac{n}{k}\big)x^{n-k-1},
\end{align*}
which after some manipulation, results in
\begin{align*}
&\gamma_0x^{n-1}+a \gamma_0(1-n)x^{n-2}-b\sum\limits_{k=2}^{n-1} \gamma_{k-1}\tbinom{n-1}{n-k-1}x^{n-k-1}\\
=\ &\gamma_0x^{n-1}+\gamma_1(n-1)x^{n-2}-b\sum\limits_{k=2}^{n-1} \gamma_{k-1}\tbinom{n-1}{n-k-1}x^{n-k-1},
\end{align*}
the last step coming from the recurrence relation for the $\gamma_n$.
A similar calculation shows that
\[
a\sum\limits_{k=0}^{n-2} \gamma_k \tbinom{n-2}{n-k-2}\big(n-1-\tfrac{n(n-1)}{k+2}\big)x^{n-k-3} =- a\sum\limits_{k=2}^{n-1} (k-1)\gamma_{k-2}\tbinom{n-1}{n-k-1}x^{n-k-1}.
\]
Putting these calculations together and again using the three-term recurrence
for $\gamma_n$, Equation~\eqref{P_n-1} holds. This completes the proof of
Lemma~\ref{constants_lemma}.
\end{proof}

\begin{lemma} \label{differential_equation}
Let $\phi(x) = \sum_{k=0}^\infty \frac{\gamma_k}{k!} x^k$. For every $a,b \in \C$, $\phi(x)$ satisfies the differential equation
\[
\phi''(x)+(ax+b)\phi'(x)+a\phi(x)=0
\]
if and only if for $n \geq 2$, the $\gamma_n$ satisfy the recurrence relation
\begin{equation} \label{diffeq_coeff_recursion}
\gamma_n = -b\gamma_{n-1}-a(n-1)\gamma_{n-2}.
\end{equation}
\end{lemma}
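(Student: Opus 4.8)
The plan is to expand each term of the differential equation as a power series in $x$ and match coefficients, using the fact that a power series is identically zero if and only if all of its coefficients vanish. Starting from $\phi(x)=\sum_{k=0}^\infty \frac{\gamma_k}{k!}x^k$, I would first record the termwise derivatives
\[
\phi'(x)=\sum_{k=0}^\infty \frac{\gamma_{k+1}}{k!}x^k, \qquad \phi''(x)=\sum_{k=0}^\infty \frac{\gamma_{k+2}}{k!}x^k,
\]
obtained by differentiating and shifting the summation index so that each series is again indexed directly by the power of $x$. The only term requiring care is $ax\phi'(x)$, where multiplication by $x$ shifts powers upward; reindexing gives $ax\phi'(x)=\sum_{k=1}^\infty \frac{a\,\gamma_k}{(k-1)!}x^k=\sum_{k=0}^\infty \frac{ak\,\gamma_k}{k!}x^k$, the last equality using $\frac{1}{(k-1)!}=\frac{k}{k!}$ (valid including $k=0$, where the term vanishes).

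With all four terms written as series in $\frac{x^k}{k!}$, I would collect the coefficient of $\frac{x^k}{k!}$ in $\phi''(x)+(ax+b)\phi'(x)+a\phi(x)$, obtaining
\[
\gamma_{k+2}+b\gamma_{k+1}+ak\gamma_k+a\gamma_k=\gamma_{k+2}+b\gamma_{k+1}+a(k+1)\gamma_k,
\]
where the key simplification is that the contribution $ak\gamma_k$ from $ax\phi'(x)$ and the contribution $a\gamma_k$ from $a\phi(x)$ combine into $a(k+1)\gamma_k$. By uniqueness of power series coefficients, the differential equation holds if and only if this expression vanishes for every $k\ge 0$, i.e. $\gamma_{k+2}=-b\gamma_{k+1}-a(k+1)\gamma_k$.

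Finally, I would perform the index shift $n=k+2$ to rewrite this as $\gamma_n=-b\gamma_{n-1}-a(n-1)\gamma_{n-2}$, which is exactly Equation~\eqref{diffeq_coeff_recursion} and holds precisely for $n\ge 2$, as claimed. I do not expect a genuine obstacle here: the argument is a direct coefficient comparison, and the only place to be careful is the bookkeeping in the $ax\phi'(x)$ term and the alignment of the two contributions proportional to $\gamma_k$. If one wishes to sidestep any convergence concerns, the entire computation can be read at the level of formal power series, where differentiation, multiplication by $x$, and coefficient extraction are all well defined and the stated equivalence is simply an identity of coefficients.
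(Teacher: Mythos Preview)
Your argument is correct and is, at its core, the same coefficient-comparison idea the paper uses. The paper, however, treats the two implications separately: for the direction (recurrence $\Rightarrow$ ODE) it substitutes the recurrence into the series for $\phi$, splits off sums that it recognizes as iterated antiderivatives of $\phi$, and then differentiates twice to recover the ODE; for the direction (ODE $\Rightarrow$ recurrence) it does exactly the coefficient extraction you describe. Your version is more economical because writing the entire expression $\phi''+(ax+b)\phi'+a\phi$ as a single power series and reading off the coefficient of $x^k/k!$ handles both implications simultaneously via the ``if and only if'' of vanishing coefficients. The paper's split treatment gains nothing mathematically; your unified computation is the cleaner presentation of the same idea.
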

As a caution, we note that the conclusion of this lemma does not quite
satisfy condition (2) of Lemma~\ref{constants_lemma} since this result only
holds true for $n \geq 2$.

\begin{proof}
First assume the $\gamma_n$ satisfy the recursion
relation~\eqref{diffeq_coeff_recursion} for $n \geq 2$. Then,
\begin{align}\label{Differential_Equation_Recurrence}
\phi(x)&=\sum\limits_{k=0}^\infty \frac{\gamma_k}{k!}x^k = \gamma_0+\gamma_1x+\sum\limits_{k=2}^\infty \frac{\gamma_k}{k!}x^k \nonumber \\
&=\gamma_0+\gamma_1x+\sum\limits_{k=2}^\infty\frac{-b\gamma_{k-1}-a(k-1)\gamma_{k-2}}{k!}x^k \nonumber \\
&=\gamma_0+\gamma_1x-b\sum\limits_{k=2}^\infty \frac{\gamma_{k-1}}{k!}x^k-a\sum\limits_{k=2}^\infty\frac{(k-1)\gamma_{k-2}}{k!}x^k \nonumber \\
&=\gamma_0+\gamma_1x+bx-b\sum\limits_{k=1}^\infty \frac{\gamma_{k-1}}{k!}x^k-ax\sum\limits_{k=2}^\infty\frac{\gamma_{k-2}}{(k-1)!}x^{k- 1}+a\sum\limits_{k=2}^\infty\frac{\gamma_{k-2}}{k!}x^k.
\end{align}
By manipulating the series expression for $\phi(x)$ and shifting indices as
needed, we obtain the following:
\[
\Big(\sum\limits_{k=2}^\infty\frac{\gamma_{k-2}}{k!}x^k\Big)''=\Big(\sum\limits_{k=1}^\infty \frac{\gamma_{k-1}}{k!}x^k\Big)'=\Big(\sum\limits_{k=2}^\infty\frac{\gamma_{k-2}}{(k-1)!}x^{k-1}\Big)'=\phi(x).
\]
Now, combining these observations with
Equation~\eqref{Differential_Equation_Recurrence}, differentiating twice and
moving all terms to the left side, we obtain
\[
\phi''(x)+(ax+b)\phi'(x)+a\phi(x)=0.
\]
Next, assume that $\phi(x)$ satisfies the given differential equation. Then,
differentiating the expression for $\phi(x)$, we have
\[
\sum_{k=2}^\infty \frac{\gamma_k}{(k-2)!} x^{k-2} + (ax+b) \sum_{k=1}^\infty \frac{\gamma_k}{(k-1)!} x^{k-1} + a \sum_{k=0}^\infty \frac{\gamma_k}{k!} x^k=0
\]
Now, let $n \geq 2$ and compare the coefficients of $x^{n-2}$ in the above
expression. This yields
\[
\frac{\gamma_n}{(n-2)!} + a\frac{\gamma_{n-2}}{(n-3)!} + b\frac{\gamma_{n-1}}{(n-2)!} + a\frac{\gamma_{n-2}}{(n-2)!}=0.
\]
Multiplying by $(n-2)!$ gives us the recurrence formula
\[
\gamma_n=-b\gamma_{n-1}-a(n-1)\gamma_{n-2}
\]
for $n \geq 2$.
\end{proof}

With the above lemmas, we are ready to prove the main result.


\begin{proof}[Proof of Theorem \ref{Main_Theorem}]
It is a simple exercise to show that, for all $\gamma_0$,
$\gamma_0e^{-\frac{\alpha}{2}x^2-\beta x}$ is a solution of the differential
equation
\[
\phi''(x)+(\alpha x + \beta)\phi'(x)+ \alpha \phi(x)=0.
\]
By Lemma~\ref{differential_equation}, if we express $\phi(x)=\gamma_0
e^{-\frac{\alpha}{2}x^2-\beta x}$ as $\sum_{k=0}^\infty
\frac{\gamma_k}{k!}x^k$ (note that the use of $\gamma_0$ is consistent since
$\phi(0)=\gamma_0$ in both cases), we know that for $n \geq 2$, we get the
recurrence
\[
\gamma_n = -\beta \gamma_{n-1} - \alpha(n-1)\gamma_{n-2}.
\]
Note $\gamma_1 = \phi'(0) = -\beta \gamma_0$, so by defining $\gamma_{-1}$ to
be zero, the recurrence holds for all $n \geq 1$. Then, by
Lemma~\ref{constants_lemma}, when $\gamma_0, \alpha \neq 0, \phi(x) =
\gamma_0 e^{-\frac{\alpha}{2}x^2 -\beta x}$, and where we define
$P_n(x)=\phi(D)x^n$ for all $n$, the set of $P_n(x)$ form an OPS satisfying
the recurrence
\begin{align*}
P_n(x) &= (x-\beta)P_{n-1}(x) - \alpha(n-1)P_{n-2}(x)  \ \ \ \ n \geq 1\\
P_0(x) &= \gamma_0 \\
P_{-1}(x) &= 0.
\end{align*}
This proves one direction of Theorem~\ref{Main_Theorem}.

Now, if we assume that $\phi(D)x^n = \big(\sum_{k=0}^\infty
\frac{\gamma_k}{k!}D^k\big)x^n = P_n(x)$ for all $n$ and this forms an OPS,
we know from Lemma~\ref{constants_lemma} that for $n\geq 1$, the $\gamma_n$
must satisfy the recurrence
\[
\gamma_n=-b \gamma_{n-1} - a(n-1)\gamma_{n-2},
\]
where $a, b, \gamma_0 \in \C, a, \gamma_0 \neq 0$, and we define $\gamma_{-1}
= 0$.  Furthermore, by Lemma~\ref{differential_equation}, $\phi(x)$ must
satisfy the differential equation
\[
\phi''(x)+(a x + b)\phi'(x)+ a \phi(x)=0.
\]
We can also conclude that for this problem, since $\phi(x)=\sum_{k=0}^\infty
\frac{\gamma_k}{k!}x^k$, $\phi(0)=\gamma_0$. Also from the recursion relation
among the $\gamma_n$, we have that $\phi'(0)=\gamma_1=-a\gamma_0$. Given
these conditions, basic knowledge of differential equations tells us that the
solution $\phi(x) = \gamma_0 e^{-\frac{a}{2}x^2-b x}$  is unique. This
completes the proof of Theorem~\ref{Main_Theorem}.
\end{proof}

Note that the choice of $\gamma_0$ simply multiplies all elements of the OPS
by $\gamma_0$, which is the leading term of each polynomial in the system.
This gives us the following corollary.
\begin{corollary}
For the sum $\phi(x)=\sum\limits_{k=0}^\infty \frac{\gamma_k}{k!}x^k$, the
operator $\phi(D)x^n=P_n(x)$  gives a monic OPS if and only if
$\phi(x)=e^{-\frac{\alpha}{2}x^2-\beta x}$  where $\alpha \neq 0$.
\end{corollary}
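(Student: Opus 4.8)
The plan is to deduce this corollary directly from Theorem~\ref{Main_Theorem}, reading ``monic OPS'' as ``OPS in which every member has leading coefficient~$1$.'' The key observation, already recorded immediately after the statement of Theorem~\ref{Main_Theorem}, is that the explicit expansion
\[
P_n(x)=\sum_{k=0}^n \gamma_{n-k}\tbinom{n}{k}x^k=\gamma_0 x^n+\cdots
\]
shows that the leading coefficient of each $P_n(x)$ is exactly $\gamma_0$. Consequently the entire content of the corollary reduces to the normalization condition $\gamma_0=1$, and the substantive classification work is all carried by Theorem~\ref{Main_Theorem}.

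First I would treat the forward direction. Suppose $\phi(D)x^n=P_n(x)$ produces a monic OPS. A monic OPS is in particular an OPS, so Theorem~\ref{Main_Theorem} applies and forces $\phi(x)=\gamma_0 e^{-\frac{\alpha}{2}x^2-\beta x}$ with $\alpha,\beta,\gamma_0\in\C$ and $\alpha,\gamma_0\neq 0$. Since the leading coefficient of each $P_n(x)$ equals $\gamma_0$, monicity forces $\gamma_0=1$, whence $\phi(x)=e^{-\frac{\alpha}{2}x^2-\beta x}$ with $\alpha\neq 0$, as claimed. For the converse, if $\phi(x)=e^{-\frac{\alpha}{2}x^2-\beta x}$ with $\alpha\neq 0$, then $\phi$ is precisely the function appearing in Theorem~\ref{Main_Theorem} in the special case $\gamma_0=1$ (with $\beta\in\C$ arbitrary and $\alpha\neq 0$). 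That theorem then guarantees that the $P_n(x)$ form an OPS, and because each has leading coefficient $\gamma_0=1$, the OPS is monic.

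I do not expect any genuine obstacle here, since no independent argument is required beyond invoking the main theorem and matching the leading coefficient. The only point needing care is the book-keeping that $\gamma_0$ is indeed the common leading coefficient of every $P_n$, but this is established by the expansion above and is already noted in the surrounding text. If a more self-contained presentation were desired, I would simply remark that multiplying the defining operator by the scalar $\gamma_0$ rescales every $P_n(x)$ by $\gamma_0$ without affecting orthogonality, so that the monic normalization corresponds bijectively to the choice $\gamma_0=1$; this is exactly the content of the ``Note'' preceding the corollary.
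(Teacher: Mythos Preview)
Your proposal is correct and matches the paper's approach exactly: the corollary is obtained from Theorem~\ref{Main_Theorem} simply by noting that the leading coefficient of every $P_n(x)$ is $\gamma_0$, so monicity is equivalent to $\gamma_0=1$. No additional argument is needed.
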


In the beginning of the proof of Theorem~\ref{Main_Theorem}, we showed that
for $\gamma_0e^{-\frac{\alpha}{2}x^2-\beta x}=\sum_{k=0}^\infty
\frac{\gamma_k}{k!}x^k$, the $\gamma_n$ satisfy the recurrence relation
\[
\gamma_n = -\beta\gamma_{n-1}-\alpha(n-1)\gamma_{n-2}.
\]
By Lemma~\ref{constants_lemma}, this implies that for $P_n(x) =
\gamma_0e^{-\frac{\alpha}{2}D^2-\beta D}x^n,$ the set of $P_n(x)$ follow the
three-term recurrence relation
\[
P_n(x)=(x-\beta)P_{n-1}(x)-\alpha(n-1)P_{n-2}(x).
\]
Now recall, as in our discussion at the beginning of this section, that the
generalized Hermite polynomials $H_n^\alpha(x)=e^{-\frac{\alpha}{2}D^2}x^n$
for real $\alpha$ follow the three-term recurrence relation
\[
H_n^\alpha(x)=xH_{n-1}^\alpha(x)-\alpha(n-1)H_{n-2}^\alpha(x).
\]
From Chihara~\cite[p.108]{Chihara_Orthogonal_Polynomials}, we know that if
$Q_n(x)$ is an OPS with $c_n$ and $\lambda_n$ as the constants of the
three-term recurrence and we have
\[
R_n(x)=Q_n(x+s),
\]
then the $R_n$ satisfy the three-term recurrence
\[
R_n(x)=(x-(c_n-s))R_{n-1}(x)-\lambda_n R_{n-2}(x), \quad n \geq 1.
\]
Given $P_0=1$ and setting $P_{-1}(x)=0$, the three-term recurrence relation
uniquely determines the system, so we see that a shift in the $c_n$ gives a
shift in the OPS. This gives us the following observation.

\begin{lemma} \label{root_preserving_lemma}
Whenever $\alpha \in \R$,
\[
e^{-\frac{\alpha}{2}D^2-\beta D}x^n=H_n^\alpha(x-\beta).
\]
 Furthermore, if $\alpha >0 $ and $\beta$ is real, the linear transformation
 $T: \R[x] \rightarrow \R[x]$ defined by $T[x^n] = H_n^{\alpha}(x-\beta)$ is
 such that whenever $p(x) \in \R[x]$ is a polynomial with only real roots,
 $T[p(x)]$ also has only real roots.
\end{lemma}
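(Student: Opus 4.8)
The plan is to prove the two assertions separately: the operator identity $e^{-\frac{\alpha}{2}D^2-\beta D}x^n=H_n^\alpha(x-\beta)$ by matching three-term recurrences and invoking uniqueness, and the real-root preservation by exhibiting $\phi(z)=e^{-\frac{\alpha}{2}z^2-\beta z}$ as a member of the Laguerre--P\'olya class and applying Theorem~\ref{LP-real-rootedness}.

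For the first identity, I would start from the recurrence~\eqref{general_Hermite_recurrence} for the generalized Hermite polynomials, which has the monic form~\eqref{monic_recurrence} with $c_n=0$ and $\lambda_n=\alpha(n-1)$. Applying the shift formula from Chihara quoted just above the lemma, with $Q_n=H_n^\alpha$ and $s=-\beta$, the polynomials $R_n(x):=H_n^\alpha(x-\beta)$ satisfy
\[
R_n(x)=\bigl(x-(c_n-s)\bigr)R_{n-1}(x)-\lambda_n R_{n-2}(x)=(x-\beta)R_{n-1}(x)-\alpha(n-1)R_{n-2}(x).
\]
By Theorem~\ref{Main_Theorem} the polynomials $P_n(x):=e^{-\frac{\alpha}{2}D^2-\beta D}x^n$ obey exactly this recurrence, with $P_0=1=R_0$ (here $\gamma_0=\phi(0)=1$) and $P_{-1}=0=R_{-1}$. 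Since a three-term recurrence together with its two initial values determines the sequence uniquely, an easy induction gives $P_n=R_n$ for all $n$, which is the desired identity.

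For the second assertion, the key observation is that $\phi(z)=e^{-\frac{\alpha}{2}z^2-\beta z}$ lies in $\mathcal{LP}$ whenever $\alpha>0$ and $\beta\in\R$: it is precisely the Weierstrass product of the defining form with $c=1$, quadratic coefficient $\tfrac{\alpha}{2}>0$, linear coefficient $-\beta\in\R$, $n=0$, and empty product. Because $T$ is linear and agrees with the linear operator $\phi(D)$ on the basis $\{x^n\}$ (by the first part), we have $T[p(x)]=\phi(D)p(x)$ for every $p\in\R[x]$; moreover $H_n^\alpha(x-\beta)$ has real coefficients for real $\alpha,\beta$, so $T$ indeed maps $\R[x]$ into $\R[x]$. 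Applying Theorem~\ref{LP-real-rootedness} with this $\phi$ then shows that $T[p(x)]$ has only real roots whenever $p\in\R[x]$ does.

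I expect no serious obstacle: given the Main Theorem, Chihara's shift identity, and P\'olya's theorem, the argument is essentially an assembly of pieces, much of it already carried out in the discussion preceding the lemma. The most delicate points are bookkeeping—keeping the shift sign $s=-\beta$ consistent so that $c_n-s=\beta$ and the constant terms line up in the first part, and confirming that the operator $T$ defined on the monomial basis genuinely coincides with $\phi(D)$ as an operator on all of $\R[x]$, so that Theorem~\ref{LP-real-rootedness} is directly applicable.
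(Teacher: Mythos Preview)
Your proposal is correct and follows essentially the same approach as the paper: for the identity you match the three-term recurrence of $e^{-\frac{\alpha}{2}D^2-\beta D}x^n$ from Theorem~\ref{Main_Theorem} with that of the shifted $H_n^\alpha(x-\beta)$ via Chihara's shift formula and invoke uniqueness, exactly as in the discussion preceding the lemma. For the real-root preservation the paper factors the operator as $e^{-\frac{\alpha}{2}D^2}$ (handled by Theorem~\ref{LP-real-rootedness}) followed by the real translation $e^{-\beta D}$, whereas you observe directly that $e^{-\frac{\alpha}{2}z^2-\beta z}\in\mathcal{LP}$ and apply Theorem~\ref{LP-real-rootedness} once; these are equivalent packagings of the same idea.
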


Note that the condition that $\beta$ is real just shifts all roots by a real
number, which justifies the statement that the operator preserves
real-rootedness. The condition that $\alpha > 0$ comes from the discussion of
Theorem~\ref{LP-real-rootedness}.

A topic of particular interest in
Chihara~\cite{Chihara_Orthogonal_Polynomials} relates to the moment
functional $\mathcal{L}$ given in Definition~\ref{Orthogonal_Polynomials}. We
include the following definition.

\begin{definition} \cite[p.13]{Chihara_Orthogonal_Polynomials}
A moment functional $\mathcal{L}$ is called \textit{positive-definite} if
$\mathcal{L}[\pi(x)]>0$ for every polynomial $\pi(x)$ that is not identically
zero and is non-negative for all real $x$.
\end{definition}

This condition causes the zeros of each polynomial in the OPS to satisfy
certain properties. These include each polynomial having real roots and
interlacing of the roots of successive polynomials. A very useful thing to
note is that for an OPS $\{P_n(x)\}_{n=0}^\infty$ satisfying the recurrence
\[
P_n(x) =(x-c_n)P_{n-1}(x)-\lambda_nP_{n-2}(x),
\]
with $\lambda_n \neq 0$, the corresponding moment functional $\mathcal{L}$ is
positive-definite if and only if $c_n$ is real and $\lambda_n > 0$. This is
another piece of the well-known Favard's Theorem 4.4 in
Chihara~\cite{Chihara_Orthogonal_Polynomials}. We now have the following
theorem, which follows from Theorem~\ref{Main_Theorem}, Lemma~\ref{root_preserving_lemma} the above observations, and the recurrence
relation~\eqref{general_Hermite_recurrence} .
\begin{theorem}
$\phi(D)x^n = (\sum_{k=0}^{\infty} \frac{\gamma_k}{k!} D^k )x^n=P_n(x)$ gives
an OPS with a positive-definite moment functional $\mathcal{L}$ if and only
if $P_n(x)=\gamma_0H_n^{\alpha}(x-\beta)$ for all $n$ with $\alpha, \beta \in
\R$, $\alpha > 0$, and $\gamma_0 \neq 0$. Specifically,
$\gamma_0H_n^{\alpha}(x-\beta)=\gamma_0 e^{-\frac{\alpha}{2}D^2-\beta D}x^n$,
and the differential operator $\phi(D)$ preserves real-rootedness.
\end{theorem}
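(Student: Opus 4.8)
The plan is to assemble the result from pieces already in hand: Theorem~\ref{Main_Theorem} supplies the explicit form of $\phi$ and the recurrence coefficients, the positive-definiteness half of Favard's theorem (recalled just above) pins down the parameters, and Lemma~\ref{root_preserving_lemma} translates the operator form into generalized Hermite polynomials and delivers real-root preservation. No new machinery is required.

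First I would apply Theorem~\ref{Main_Theorem} to replace the hypothesis that $\phi(D)x^n = P_n(x)$ is an OPS by the concrete data $\phi(x) = \gamma_0 e^{-\frac{\alpha}{2}x^2 - \beta x}$ with $\alpha, \gamma_0 \neq 0$, together with the three-term recurrence
\[
P_n(x) = (x-\beta)P_{n-1}(x) - \alpha(n-1)P_{n-2}(x), \qquad P_0(x) = \gamma_0, \quad P_{-1}(x) = 0.
\]
Matching this against the general form $P_n = (x-c_n)P_{n-1} - \lambda_n P_{n-2}$, I read off $c_n = \beta$ and $\lambda_n = \alpha(n-1)$, both intrinsic to the recurrence and hence unaffected by the scalar $\gamma_0$.

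Next I would invoke the positive-definiteness criterion: $\mathcal{L}$ is positive-definite exactly when $c_n$ is real and $\lambda_n > 0$. Since $\gamma_0$ only rescales every $P_n$ and leaves the orthogonality---hence the moment functional and its positive-definiteness---unchanged, the criterion applies verbatim to $c_n = \beta$ and $\lambda_n = \alpha(n-1)$. Thus $\mathcal{L}$ is positive-definite if and only if $\beta \in \R$ and $\alpha(n-1) > 0$ for $n \geq 2$; because $n - 1 \geq 1 > 0$ on this range (and $\lambda_1$ is irrelevant, multiplying $P_{-1} = 0$), this is equivalent to $\alpha > 0$ and $\beta \in \R$. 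This establishes the equivalence in both directions simultaneously.

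Finally, with $\alpha > 0$ and $\beta \in \R$ secured, I would apply Lemma~\ref{root_preserving_lemma}, whose identity $e^{-\frac{\alpha}{2}D^2 - \beta D}x^n = H_n^\alpha(x-\beta)$ (valid for $\alpha \in \R$) gives
\[
P_n(x) = \gamma_0 e^{-\frac{\alpha}{2}D^2 - \beta D}x^n = \gamma_0 H_n^\alpha(x-\beta),
\]
the claimed explicit form; the real-root-preserving assertion is then the second half of Lemma~\ref{root_preserving_lemma}, whose hypotheses $\alpha > 0$ and $\beta$ real are precisely what positive-definiteness provided. I anticipate no substantive obstacle, since every ingredient is established; the only point demanding care is the bookkeeping around normalization---verifying that the scalar $\gamma_0$ changes neither the recurrence coefficients $c_n, \lambda_n$ nor the positive-definiteness of $\mathcal{L}$, and correctly discarding the vacuous $n = 1$ case before concluding $\alpha > 0$ from $\lambda_n = \alpha(n-1) > 0$.
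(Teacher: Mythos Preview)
Your proposal is correct and follows essentially the same route as the paper, which does not even supply a formal proof but simply states that the theorem follows from Theorem~\ref{Main_Theorem}, Lemma~\ref{root_preserving_lemma}, the Favard positive-definiteness criterion recalled just above, and the recurrence~\eqref{general_Hermite_recurrence}. Your write-up is a faithful and somewhat more careful elaboration of that outline, including the bookkeeping about $\gamma_0$ and the vacuous $n=1$ case.
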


\section{Another Example of a real-root preserving Differential Operator}

The Laguerre polynomials are another type of OPS that depend on a real parameter $\alpha$. (Note that some authors only define these for $\alpha > -1$.) They have the following well-known closed form expression:
\begin{equation} \label{Laguerre_closed_form}
L_n^\alpha(x) = \sum_{r=0}^n \frac{(-1)^r}{r!} \binom{n+\alpha}{n-r} x^r.
\end{equation}
It was proved by Fisk~\cite{Fisk_Laguerre_Polynomials} that the transformation $T[x^n]=L_n(x)$, where the $L_n(x)$ are the standard Laguerre polynomials ($\alpha=0$), preserves real-rootedness. In this section, we construct the explicit differential operator representation of this transformation. As far as we know, this expression is new.

\begin{theorem} \label{Laguerre_differential_operator_theorem}
The transformation to the standard Laguerre polynomials ($\alpha=0 $) can be
expressed as a differential operator by
\begin{equation} \label{differential_operator_Laguerre}
L_n(x)=\Big(\sum_{k=0}^{\infty} \frac{p_k(x)}{k!}D^k\Big)[x^n],
\end{equation}
where $L_n$ is the $n^{th}$ Laguerre Polynomial and
\begin{equation} \label{expression_for:p_n(x)}
p_n(x)= \sum_{r=0}^n \sum_{l=0}^{r} \frac{  \binom{n}{r}  \binom{r}{l} (-1)^r  }{  l!  }  x^r \quad\text{ for all } n.
\end{equation}
\end{theorem}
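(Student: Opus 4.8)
The plan is to verify directly that the operator $\sum_{k=0}^\infty \frac{p_k(x)}{k!}D^k$, with $p_k$ given by~\eqref{expression_for:p_n(x)}, sends $x^n$ to $L_n(x)$ for every $n$; by the uniqueness half of Proposition~\ref{Piotrowski_differential_operators}, this identifies the $p_k$ as the polynomials representing the transformation $T[x^n]=L_n(x)$, so no separate derivation of the $p_k$ is needed. Since $D^k x^n=\tfrac{n!}{(n-k)!}x^{n-k}$ for $k\le n$ and vanishes otherwise, the first step is to record
\[
\Big(\sum_{k=0}^\infty \tfrac{p_k(x)}{k!}D^k\Big)[x^n]=\sum_{k=0}^n \binom{n}{k}p_k(x)\,x^{n-k},
\]
reducing the theorem to the polynomial identity $\sum_{k=0}^n \binom{n}{k}p_k(x)x^{n-k}=L_n(x)$. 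Writing $p_k(x)=\sum_{r=0}^k \binom{k}{r}(-1)^r a_r x^r$ with $a_r:=\sum_{l=0}^r \frac{1}{l!}\binom{r}{l}$, I would substitute this in, interchange the order of summation, and extract the coefficient of each power $x^m$.

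Second, I would simplify the coefficient of $x^m$. Collecting the terms with $n-k+r=m$ and setting $s=r$, $k=n-m+s$, the coefficient becomes $\sum_{s=0}^m \binom{n}{n-m+s}\binom{n-m+s}{s}(-1)^s a_s$. Two applications of the subset-of-a-subset identity $\binom{n}{j}\binom{j}{i}=\binom{n}{i}\binom{n-i}{j-i}$ collapse the product of binomials to $\binom{n}{m}\binom{m}{s}$, so the coefficient of $x^m$ equals $\binom{n}{m}\sum_{s=0}^m \binom{m}{s}(-1)^s a_s$. Comparing with the closed form~\eqref{Laguerre_closed_form} at $\alpha=0$, whose coefficient of $x^m$ is $\frac{(-1)^m}{m!}\binom{n}{m}$, the whole theorem reduces to the single scalar identity $\sum_{s=0}^m (-1)^s\binom{m}{s}a_s=\frac{(-1)^m}{m!}$.

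Third, I would prove this core identity by binomial inversion. The definition $a_s=\sum_{l=0}^s \binom{s}{l}\frac{1}{l!}$ exhibits the sequence $(a_s)$ as the binomial transform of the sequence $(1/l!)$; the inverse relation then reads $\frac{1}{m!}=\sum_{s=0}^m (-1)^{m-s}\binom{m}{s}a_s$, which is exactly the desired identity after multiplying through by $(-1)^m$. Equivalently, one observes that $\sum_{s=0}^m(-1)^s\binom{m}{s}a_s$ is $(-1)^m$ times the $m$-th forward difference $\Delta^m a_0$, and finite-difference inversion of the binomial transform gives $\Delta^m a_0=1/m!$.

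I expect the main obstacle to be the bookkeeping in the second step---correctly reindexing the double sum and spotting that the binomial product telescopes to $\binom{n}{m}\binom{m}{s}$---together with recognizing the resulting scalar identity as an instance of binomial inversion. Once $a_s$ is seen as the binomial transform of $1/l!$, the final identity is immediate, and the bulk of the care goes into the index manipulations that precede it. As sanity checks anchoring this verification, I would confirm the base cases $p_0=1$ (i.e.\ $L_0=1$) and $m=0$, which make the reindexing and the inversion easy to audit.
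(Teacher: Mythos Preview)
Your proposal is correct. The paper arrives at the same scalar identity
\[
\sum_{s=0}^{m}(-1)^{s}\binom{m}{s}\,a_{s}=\frac{(-1)^{m}}{m!},\qquad a_{s}=\sum_{l=0}^{s}\frac{1}{l!}\binom{s}{l},
\]
but reaches it by a longer, derivational route: it first proves (Lemma~\ref{representation_of_coefficients}) that the coefficients of the unknown $p_n$ necessarily factor as $q_{n,r}=\binom{n}{r}a_r$ with $a_r$ determined by a recursion, then (Lemma~\ref{closed_form_satisfies_recursion}) shows the closed form for $a_r$ satisfies that recursion, which it rewrites as the identity above and verifies (Lemma~\ref{Weird_way_to_write:(-1)^r/r!}) by swapping the order of summation. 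Your approach bypasses the recursion entirely: you take the stated $p_k$, plug them into $\sum_k\binom{n}{k}p_k(x)x^{n-k}$, and reduce via the trinomial revision $\binom{n}{n-m+s}\binom{n-m+s}{s}=\binom{n}{m}\binom{m}{s}$ straight to the same scalar identity, which you then dispatch in one line by recognizing $a_s$ as the binomial transform of $1/l!$ and invoking binomial inversion. The two proofs of the core identity are essentially equivalent---the paper's summation swap is the unpacked form of the inversion---but your organization is tighter: it avoids introducing and solving an auxiliary recursion, and it makes transparent why the identity holds. The paper's route has the minor advantage of \emph{discovering} the form $q_{n,r}=\binom{n}{r}a_r$ rather than assuming it, but since the theorem statement already hands you the $p_k$, your direct verification is the cleaner choice.
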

By Proposition~\ref{Piotrowski_differential_operators}, a unique
representation of the form in Equation~\eqref{differential_operator_Laguerre}
exists where $p_k(x)$ is a polynomial for all $k$.
Piotrowski~\cite{Piotrowski_PhD} also shows in the proof of this proposition
that the $p_k(x)$ can be given recursively by
\begin{align*}
p_0(x) & = T[1]  \\
p_n(x) & = T[x^n]-\sum_{k=0}^{n-1} \frac{p_k(x)}{k!} D^k x^n,
\end{align*}
where $T$ represents the linear transformation from $x^n$ to $L_n(x)$. Noting
that $T[x^n]=L_n(x)$ has degree $n$, the above formula inductively shows that
$p_n(x)$ has degree at most $n$ for all $n$. Hence, we can write
\begin{equation} \label{p_n:in_terms_of_coefficients}
p_n(x)= \sum_{r=0}^n q_{n,r}~x^r \text{ for all } n \geq 0,
\end{equation}
where the $q_{n,r}$ are constants. With this notation in place, we are now
able to prove the following lemmas.


\begin{lemma} \label{representation_of_coefficients}
For all $n,r$ with $0 \leq r \leq n$, we have  $q_{n,r}=\binom{n}{r}a_r$
where $a_0=1$, and for $r \geq 1$, the following recurrence relation holds:
\[
a_r=\frac{(-1)^r}{r!}-\sum_{k=0}^{r-1} \binom{r}{k} a_k.
\]
\end{lemma}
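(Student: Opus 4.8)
The plan is to prove the claim by strong induction on $n$, feeding Piotrowski's recursive formula for $p_n(x)$ into the inductive hypothesis and reading off both the product structure $q_{n,r}=\binom{n}{r}a_r$ and the recurrence for $a_r$ at the same time. Before inducting I would record two elementary facts. First, since $\frac{1}{k!}D^k x^n=\binom{n}{k}x^{n-k}$, the recursion for $p_n$ takes the form
\[
p_n(x)=L_n(x)-\sum_{k=0}^{n-1}\binom{n}{k}x^{n-k}\,p_k(x).
\]
Second, the closed form~\eqref{Laguerre_closed_form} with $\alpha=0$ shows that the coefficient of $x^r$ in $L_n(x)$ is $\frac{(-1)^r}{r!}\binom{n}{r}$. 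The base case $n=0$ is immediate: $p_0(x)=L_0(x)=1$, so $q_{0,0}=1=\binom{0}{0}a_0$, which forces $a_0=1$.

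For the inductive step I would assume $q_{k,j}=\binom{k}{j}a_j$ for all $k<n$, substitute $p_k(x)=\sum_{j=0}^{k}\binom{k}{j}a_j x^j$ into the displayed recursion, and extract the coefficient of $x^r$. The generic term $\binom{n}{k}\binom{k}{j}a_j x^{n-k+j}$ contributes to $x^r$ precisely when $j=k-(n-r)$, and as $k$ ranges over $n-r\le k\le n-1$ the index $i:=j$ ranges over $0\le i\le r-1$. After reindexing and applying the identity $\binom{n}{k}\binom{k}{j}=\binom{n}{j}\binom{n-j}{k-j}$, the subtracted sum contributes $\sum_{i=0}^{r-1}\binom{n}{i}\binom{n-i}{r-i}a_i$ to the coefficient of $x^r$. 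The crucial simplification is the subset-of-subset identity $\binom{n}{i}\binom{n-i}{r-i}=\binom{n}{r}\binom{r}{i}$, which factors out a common $\binom{n}{r}$. Combining this with the $L_n$ coefficient then gives
\[
q_{n,r}=\binom{n}{r}\Big(\frac{(-1)^r}{r!}-\sum_{i=0}^{r-1}\binom{r}{i}a_i\Big),
\]
so defining the bracketed quantity to be $a_r$ closes the induction and reproduces exactly the asserted recurrence.

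I expect the main obstacle to be the index bookkeeping in the double sum: identifying which pairs $(k,j)$ land in a given power $x^r$, reindexing cleanly so that $k\le n-1$ translates into $i\le r-1$, and applying the binomial identity in the form that exposes the common factor $\binom{n}{r}$. A secondary point needing care is confirming that the bracketed expression defining $a_r$ depends only on $r$ and not on $n$ — this is precisely what makes the factorization $q_{n,r}=\binom{n}{r}a_r$ meaningful — but it follows automatically once $\binom{n}{r}$ is pulled out, since the remaining sum $\sum_{i=0}^{r-1}\binom{r}{i}a_i$ contains no $n$. Finally, I would check the boundary value $r=0$ separately, where the subtracted sum is empty and $q_{n,0}=1$ recovers $a_0=1$ consistently with the base case.
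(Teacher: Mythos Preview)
Your proposal is correct and follows essentially the same route as the paper's proof: both derive the relation $q_{n,r}=\binom{n}{r}\frac{(-1)^r}{r!}-\sum q_{n-k,r-k}\binom{n}{k}$ by comparing $x^r$-coefficients in the identity $L_n(x)=\sum_k \binom{n}{k}p_k(x)x^{n-k}$, then close the induction via the subset-of-subset identity $\binom{n-k}{r-k}\binom{n}{k}=\binom{n}{r}\binom{r}{k}$. Your indexing (summing over $p_k$ with $k\le n-1$) is just the paper's (summing over $p_{n-k}$ with $k\ge 1$) after the substitution $k\leftrightarrow n-k$, and your two-step use of $\binom{n}{k}\binom{k}{j}=\binom{n}{j}\binom{n-j}{k-j}$ followed by $\binom{n}{i}\binom{n-i}{r-i}=\binom{n}{r}\binom{r}{i}$ collapses to the single identity the paper invokes.
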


\begin{proof}
Setting $\alpha = 0$, we get from Equation~\eqref{Laguerre_closed_form} that
\[
L_n(x)=\sum_{r=0}^n \binom{n}{r} \frac{(-1)^r}{r!} x^r .
\]
We can then combine this with Equation~\eqref{differential_operator_Laguerre}
to obtain
\begin{gather*}
\sum_{r=0}^n \binom{n}{r} \frac{(-1)^r}{r!} x^r=(\sum_{k=0}^{\infty} \frac{p_k(x)}{k!}D^k)[x^n]=\sum_{k=0}^{n} p_k(x) \frac{n(n-1)\ldots(n-k+1)}{k!}x^{n-k} \\
=\sum_{k=0}^{n} p_k(x) \binom{n}{n-k} x^{n-k}=\sum_{k=0}^{n} p_{n-k}(x) \binom{n}{k} x^{k}.
\end{gather*}
We will now compare coefficients of each power
of $x$ in the equation
\[
\sum_{r=0}^n \binom{n}{r} \frac{(-1)^r}{r!} x^r = \sum_{k=0}^{n} p_{n-k}(x) \binom{n}{k} x^{k}.
\]
Comparing constant terms yields $1=p_{n,0}.$ This must hold true for all $n$.
Now comparing coefficients of $x$, we obtain
\[
\binom{n}{1} \frac{(-1)^1}{1!} x = q_{n-1,0} \binom{n}{n-1} x + q_{n,1} \binom{n}{n} x,
\]
which yields
\[
q_{n,1}=-\binom{n}{1} - q_{n-1,0}\binom{n}{1}=-2 \binom{n}{1}
\]
since $p_{n,0}=1$ for all $n$. This proves the lemma for $n\leq 1$. In
general, we see that
\[
\binom{n}{r} \frac{(-1)^r}{r!} x^r = \sum_{k=0}^{r} q_{n-k,r-k} \binom{n}{k} x^r,
\]
which gives the equation
\begin{equation} \label{recursive_for:p_n,r}
q_{n,r} = \binom{n}{r}\frac{(-1)^r}{r!}-\sum_{k=1}^{r} q_{n-k,r-k} \binom{n}{k}.
\end{equation}
Now, assume inductively that $n>1$ and that for all $m<n$,
$p_{m,r}=\binom{m}{r} a_r$, where $a_r$ does not depend on $n$. Note that a
simple manipulation of binomial coefficients gives $\binom{n-k}{r-k}
\binom{n}{k} = \binom{n}{r} \binom{r}{k}$. Then, from
Equation~\eqref{recursive_for:p_n,r},
\begin{gather*}
q_{n,r}=\binom{n}{r} \frac{(-1)^r}{r!}-\sum_{k=1}^{r} \binom{n-k}{r-k} \binom{n}{k} a_{r-k}=\binom{n}{r} \frac{(-1)^r}{r!}-\sum_{k=1}^{r} \binom{n}{r} \binom{r}{k} a_{r-k} \\
= \binom{n}{r} \Big(\frac{(-1)^r}{r!}-\sum_{k=1}^{r} \binom{r}{k} a_{r-k}\Big)=
\binom{n}{r} \Big(\frac{(-1)^r}{r!}-\sum_{k=0}^{r-1} \binom{r}{r-k} a_{k}\Big).
\end{gather*}
Simply noting $\binom{r}{r-k} = \binom{r}{k}$ proves the lemma.
\end{proof}


\begin{lemma} \label{Weird_way_to_write:(-1)^r/r!}
The following identity holds for all $r\geq 0$:
\[
\sum_{k=0}^{r} \sum_{l=0}^{k} \frac{ \binom{r}{k} \binom{k}{l} (-1)^k }{l!}
=\frac{(-1)^r}{r!}.
\]
\end{lemma}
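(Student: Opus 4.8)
The plan is to prove the identity directly, by reorganizing the double sum so that a single alternating binomial sum appears and then collapses; I would avoid induction on $r$, which is possible but produces messier bookkeeping. First I would interchange the order of summation. Since the summation region is $0 \le l \le k \le r$, writing $l$ as the outer index and $k$ as the inner index gives
\[
\sum_{k=0}^{r} \sum_{l=0}^{k} \frac{\binom{r}{k}\binom{k}{l}(-1)^k}{l!} = \sum_{l=0}^{r} \frac{1}{l!} \sum_{k=l}^{r} \binom{r}{k}\binom{k}{l}(-1)^k.
\]

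The key algebraic step is the subset-of-a-subset identity $\binom{r}{k}\binom{k}{l} = \binom{r}{l}\binom{r-l}{k-l}$, which I would verify by expanding both sides into factorials. This lets me factor $\binom{r}{l}$ out of the inner sum, since it no longer depends on $k$. Substituting $j = k-l$ and using $(-1)^k = (-1)^l(-1)^j$, the inner sum becomes $(-1)^l \sum_{j=0}^{r-l}\binom{r-l}{j}(-1)^j$, which the binomial theorem identifies as $(-1)^l (1-1)^{r-l}$.

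The decisive observation is that $(1-1)^{r-l}$ vanishes whenever $l < r$ and equals $1$ precisely when $l = r$. Consequently every term of the outer sum dies except the one at $l = r$, which contributes $\tfrac{1}{r!}\binom{r}{r}(-1)^r(1-1)^0 = \tfrac{(-1)^r}{r!}$, exactly the right-hand side.

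I do not anticipate a serious obstacle: the argument is a standard ``orthogonality annihilates all but the top term'' phenomenon. The only points needing care are confirming the index shift $j = k - l$ together with the sign split $(-1)^k = (-1)^l(-1)^j$, and checking the subset-of-a-subset identity with the correct factorials. The conceptual crux---and the step most likely to be overlooked---is realizing that summing over $l$ first, rather than over $k$ first, is what exposes the collapsing alternating sum.
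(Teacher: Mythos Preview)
Your proposal is correct and follows essentially the same route as the paper's own proof: interchange the order of summation, apply the subset-of-a-subset identity $\binom{r}{k}\binom{k}{l}=\binom{r}{l}\binom{r-l}{k-l}$, shift the inner index, and collapse the alternating binomial sum via $(1-1)^{r-l}$ so that only the $l=r$ term survives. The only cosmetic difference is that the paper separates off the $l=r$ term before applying the identity, whereas you observe its survival at the end; the underlying argument is identical.
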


\begin{proof}
This identity is proved by changing the order of summation. By changing the
order of $k$ and $l$ in the sum on the left, we obtain
\[
\sum_{k=0}^{r} \sum_{l=0}^{k} \frac{ \binom{r}{k} \binom{k}{l} (-1)^k }{l!}
=
\sum_{l=0}^{r} \sum_{k=l}^{r} \frac{ \binom{r}{k} \binom{k}{l} (-1)^k}{l!}
=
\frac{ (-1)^r }{ r! } + \sum_{l=0}^{r-1} \sum_{k=l}^{r} \frac{ \binom{r}{k} \binom{k}{l} (-1)^k}{l!}.
\]
By simple comparison from the definition of binomial coefficients, we note
that $\binom{r}{k} \binom{k}{l} = \binom{r}{l} \binom{r-k}{k-1}$. The above
sum then becomes
\[
\frac{ (-1)^r }{ r! }+\sum_{l=0}^{r-1} \frac{ \binom{r}{l} }{l!} \sum_{k=l}^{r} \binom{r-k}{k-l} (-1)^k,
\]
which after a change of variable in the second sum is
\[
\frac{(-1)^r}{r!}+\sum_{l=0}^{r-1} \frac{ \binom{r}{l} }{l!} \sum_{k=0}^{r-l} \binom{r-l}{k} (-1)^{k+l}
=\frac{(-1)^r}{r!}+\sum_{l=0}^{r-1} \frac{ \binom{r}{l} }{l!}(-1)^{l}(1-1)^{r-l} = \frac{(-1)^r}{r!}.
\]
\end{proof}


\begin{lemma} \label{closed_form_satisfies_recursion}
The closed-form expression
\begin{equation} \label{closed-form-solution-a_r}
a_r=(-1)^r \sum_{l=0}^r \frac{ \binom{r}{l} }{l!}
\end{equation}
is the unique solution to the recursion formula
\begin{equation} \label{recursive_formula:a_r}
a_r=\frac{(-1)^r}{r!}-\sum_{k=0}^{r-1} \binom{r}{k} a_k
\end{equation}
such that $a_0=1$.
\end{lemma}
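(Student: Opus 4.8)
The plan is to separate the statement into its two assertions — uniqueness and correctness of the proposed closed form — and to handle them in that order, with essentially all of the real work reducing to the previously established Lemma~\ref{Weird_way_to_write:(-1)^r/r!}.

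Uniqueness I would dispose of first, and it is immediate: the recurrence~\eqref{recursive_formula:a_r} expresses each $a_r$ explicitly in terms of the earlier terms $a_0,\dots,a_{r-1}$, so once the initial value $a_0=1$ is prescribed there is no further freedom and the entire sequence is pinned down. It therefore suffices to verify that the candidate~\eqref{closed-form-solution-a_r} both meets the initial condition and satisfies the recurrence; being a solution with the correct initial value, it must then be \emph{the} unique solution.

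For the initial condition, setting $r=0$ in~\eqref{closed-form-solution-a_r} gives $a_0=(-1)^0\binom{0}{0}/0!=1$, as required. For the recurrence the key manoeuvre is to absorb the isolated $a_r$ into the summation. Since $\binom{r}{r}=1$, the recurrence $a_r=\frac{(-1)^r}{r!}-\sum_{k=0}^{r-1}\binom{r}{k}a_k$ is equivalent to the single all-in-one-sum identity
\[
\sum_{k=0}^{r}\binom{r}{k}a_k=\frac{(-1)^r}{r!}.
\]
I would then substitute the closed form $a_k=(-1)^k\sum_{l=0}^{k}\binom{k}{l}/l!$ into the left-hand side, obtaining the double sum
\[
\sum_{k=0}^{r}\sum_{l=0}^{k}\frac{\binom{r}{k}\binom{k}{l}(-1)^k}{l!},
\]
which is precisely the expression evaluated in Lemma~\ref{Weird_way_to_write:(-1)^r/r!} and hence equals $\frac{(-1)^r}{r!}$. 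This completes the verification.

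The proof is thus bookkeeping once the correct reformulation is spotted; the genuine combinatorial content has been offloaded to Lemma~\ref{Weird_way_to_write:(-1)^r/r!}, whose identity (established by interchanging the order of summation and collapsing a binomial sum through $(1-1)^{r-l}$) does the heavy lifting. The only point demanding care is the equivalence between the recurrence form and the all-in-one-sum form: I must check that the boundary term $k=r$ contributes exactly $a_r$, so that subtracting it recovers~\eqref{recursive_formula:a_r} with no stray sign or index error. Everything else is a direct appeal to the preceding lemma.
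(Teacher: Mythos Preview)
Your proposal is correct and follows essentially the same approach as the paper: rewrite the recurrence as the equivalent identity $\sum_{k=0}^{r}\binom{r}{k}a_k=\frac{(-1)^r}{r!}$ by absorbing the $k=r$ term, substitute the closed form, and invoke Lemma~\ref{Weird_way_to_write:(-1)^r/r!}. You are slightly more explicit than the paper in spelling out uniqueness and checking the initial condition $a_0=1$, but the core argument is identical.
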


\begin{proof}
We will assume that~\eqref{closed-form-solution-a_r} holds for all $r\geq 0$
and then prove that this satisfies equation~\eqref{recursive_formula:a_r}.
Note that we can rewrite~\eqref{recursive_formula:a_r} as
\[
a_r=\frac{(-1)^r}{r!}-\sum_{k=0}^{r} \binom{r}{k} a_k +a_r,
\]
which is equivalent to
\[
\frac{(-1)^r}{r!}=\sum_{k=0}^r \binom{r}{k} a_k.
\]
Substituting Equation~\eqref{closed-form-solution-a_r} in for the $a_k$ on
the right hand side and directly applying Lemma~\ref{Weird_way_to_write:(-1)^r/r!} proves the lemma.
\end{proof}
Lemmas~\ref{closed_form_satisfies_recursion}
and~\ref{representation_of_coefficients} combined with
Equation~\eqref{p_n:in_terms_of_coefficients} prove
Theorem~\ref{Laguerre_differential_operator_theorem}.


\section{Open Problems and Further Research}
In this paper, we described the differential operator representation of two
types of real-root preserving linear transformations. In Borcea and
Br\"and\'en~\cite{Borcea_Branden}, a classification for all linear operators
that preserve real-rootedness is given. A natural problem following these
results is to classify all linear operators that preserve real-rootedness and
are of the form $T[x^n]=P_n(x)$ where $\{P_n(x)\}$ is an OPS. In general, we
do not expect an OPS to satisfy easily accessible formulas as is the case
with a classical OPS. However, we do know that every OPS satisfies a
three-term recurrence relation
\begin{equation}
P_{n+1}(x)=(A_n x+B_n)P_n(x)-C_n P_{n-1}(x),
\end{equation}
with $A_n, C_n \neq 0$. So far, our attempts on this more general problem
have not been successful because of the difficulty working with arbitrary
sequences of constants in the recurrence relation.

\begin{problem} \label{problem 1}
Classify all real-root preserving transformations $T$ such that
$T[x^n]=P_n(x)$ for all $n$ where $\{P_n(x)\}_{n=0}^\infty$ is an OPS.
\end{problem}

At the beginning of Section 2, we made a few short comments about the
standard Hermite polynomials. We noted that they have the differential
operator expression $H_n(x)=2^n e^{-\frac{D^2}{4}}x^n$. Note that this is not
of the form $\gamma_0 e^{-\frac{\alpha}{2}D^2-\beta D}x^n$ because of the
extra $2^n$ scalar. Thus, from Theorem~\ref{Main_Theorem}, we know that $2^n
e^{-\frac{D^2}{4}}$ is not of the form $\sum_{k=0}^\infty
\frac{\gamma_k}{k!}D^k$. We can see from
Definition~\ref{Orthogonal_Polynomials} that multiplying the polynomials in
an OPS by nonzero constants does not change the orthogonality of the system.
Thus, the operator $e^{-\frac{D^2}{4}}x^n$ gives an OPS, and multiplying by
the $2^n$ term scales each of the polynomials in the set. It is well known
that the transformation $T[x^n]=H_n(x)$ preserves real-rootedness as a
consequence of the result quoted in Theorem~\ref{LP-real-rootedness}. This
suggests the following problem.

\begin{problem}
Express $T[x^n]=H_n(x)$ as $T[x^n] = \big(\sum_{k=0}^\infty \frac{p_k(x)}{k!}
D^k \big)x^n$ in closed form.
\end{problem}
It appears that the above problem is not too difficult because this
transformation is only the rescaling of a known differential operator.

In proving Theorem~\ref{Laguerre_differential_operator_theorem}, we also
attempted to find the differential operator representation for $T[x^n] =
L_n^\alpha(x)$ where the $L_n^\alpha(x)$ are the generalized Laguerre
polynomials with $\alpha \in \R$ arbitrary. However, the extra $\alpha$ term
in the expression for these polynomials,
\[
L_n^\alpha(x) = \sum_{r=0}^n \frac{(-1)^r}{r!} \binom{n+\alpha}{n-r} x^r,
\]
made it so that the binomial relationships involved were much more
complicated. It is possible that the following problem could be solved by
applying similar methods to those in this paper and developing some new
clever ideas.

\begin{problem}
Find the differential operator representation for the transformation
$T[x^n]=L_n^\alpha(x)$ where $\alpha \in \R$ is arbitrary.
\end{problem}

Another interesting problem deals with the classification of real-root
preserving operators given in Borcea and Br\"and\'en~\cite{Borcea_Branden}.
Two characterizations of these operators are given, but we are also
interested in describing an arbitrary linear real-root preserver as a
differential operator in the form $T[f(x)]=\sum_{k=0}^\infty
\frac{p_k(x)}{k!} f^{(k)}(x).$

\begin{problem} \label{differential_operator_problem}
Given an arbitrary real-root preserving linear transformation producing an
OPS, describe its representation as a differential operator in closed form
$T[f(x)]=\sum_{k=0}^\infty \frac{p_k(x)}{k!} f^{(k)}(x)$ .
\end{problem}

A more general problem could also be taken from
Problem~\ref{differential_operator_problem} by removing the condition that
the linear transformation produce an OPS.

In Section 1, we gave a definition of orthogonal polynomials in terms of a
moment functional.  For the Hermite polynomials, the moment functional is
defined by
\[
\mathcal{L}[f(x)] = \int_{-\infty}^\infty f(x)e^{-x^2} dx.
\]
The moment functional for the general Laguerre polynomials is defined by
\[
\mathcal{L}[f(x)] = \int_0^\infty f(x)x^\alpha e^{-x} dx.
\]
The Jacobi polynomials $P_n^{\alpha,\beta}(x)$ for $\alpha, \beta \in \R$ are another type of OPS. The commonly known Chebyshev and Legendre polynomials are special cases of the these polynomials, and their moment functional is defined by
\[
\mathcal{L}[f(x)] = \int_{-1}^1 f(x) (1-x)^\alpha (1+x)^\beta dx.
\]
The above functionals can be found in Chihara~\cite{Chihara_Orthogonal_Polynomials}, p.~148.
Knowing that the last integral is defined on the interval $[-1,1]$, consider the following theorems.

\bigskip
\begin{theorem}
~\cite[Thm.~1, p.~559]{Iserles_and_Saff} Let the polynomial $\sum_{k=0}^n q_k x^k$ be a polynomial, with real coefficients $q_0,q_1,\ldots q_n$, have all of its zeros in the complex open unit disk. Then all of the zeros of $\sum_{k=0}^n q_kT_k(x)$, where $T_k(x)$ is the $kth$ Chebyshev polynomial of the first kind, lie in the open interval $(-1,1)$.
\end{theorem}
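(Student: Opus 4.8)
The plan is to pass to the trigonometric substitution $x=\cos\theta$ and reduce the claim to a zero-counting statement for a single real function on $(0,\pi)$. Since $T_k(\cos\theta)=\cos(k\theta)$ by definition and the $q_k$ are real, for $\theta\in[0,\pi]$ we have
\[
\sum_{k=0}^n q_k T_k(\cos\theta)=\sum_{k=0}^n q_k\cos(k\theta)=\re\!\big(p(e^{i\theta})\big),\qquad p(z):=\sum_{k=0}^n q_k z^k.
\]
Because $\cos$ is a strictly decreasing bijection from $(0,\pi)$ onto $(-1,1)$, it suffices to show that $\re\!\big(p(e^{i\theta})\big)$ has exactly $n$ zeros for $\theta\in(0,\pi)$; these pull back to $n$ distinct zeros of $\sum q_k T_k(x)$ in $(-1,1)$, which is the full count since $\deg\!\big(\sum q_k T_k\big)=n$ (the leading coefficient is $2^{\,n-1}q_n\neq0$).

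The key step is a monotonicity lemma for the argument of $p$ along the unit circle. Writing $p(z)=q_n\prod_{j=1}^n(z-\alpha_j)$ with each $|\alpha_j|<1$ by hypothesis, I would compute, for $z=e^{i\theta}$ on $|z|=1$,
\[
\frac{d}{d\theta}\arg p(e^{i\theta})=\re\!\Big(\frac{z\,p'(z)}{p(z)}\Big)=\sum_{j=1}^n\re\!\Big(\frac{z}{z-\alpha_j}\Big)=\sum_{j=1}^n\frac{1-\re(\alpha_j\bar z)}{|1-\alpha_j\bar z|^{2}}>0,
\]
where positivity follows from $|\alpha_j\bar z|=|\alpha_j|<1$. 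Thus $\theta\mapsto\arg p(e^{i\theta})$ is strictly increasing, and by the argument principle its total increase over $[0,2\pi]$ equals $2\pi n$, the number of zeros of $p$ inside the disk. The real-coefficient symmetry $p(e^{-i\theta})=\overline{p(e^{i\theta})}$ then splits this evenly, forcing the increase over $[0,\pi]$ to be exactly $\pi n$.

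Finally I would extract the zero count. Since the zeros of $p$ lie strictly inside the disk, $p(\pm1)\neq0$, so the endpoint values $p(1)=\sum q_k$ and $p(-1)=\sum(-1)^k q_k$ are real and nonzero; hence $\arg p(e^{i\theta})$ starts and ends at integer multiples of $\pi$ as $\theta$ runs over $[0,\pi]$. Writing $\re\!\big(p(e^{i\theta})\big)=|p(e^{i\theta})|\cos\!\big(\arg p(e^{i\theta})\big)$, the zeros occur precisely where $\arg p(e^{i\theta})\equiv\tfrac{\pi}{2}\pmod{\pi}$; as the argument increases strictly and continuously by $\pi n$ from one multiple of $\pi$ to another, it crosses exactly $n$ levels of the form $\tfrac{\pi}{2}+m\pi$, each exactly once. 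This produces exactly $n$ simple zeros in $(0,\pi)$, and the nonvanishing of $p(\pm1)$ confirms that $\pm1$ themselves are excluded, giving the open interval.

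The main obstacle is the monotonicity lemma of the second paragraph: a priori the argument of $p$ could oscillate along the circle, so that the global winding number $n$ would only give a lower bound on the number of sign changes of the real part. It is precisely the hypothesis that every zero lies inside the unit disk that makes each Blaschke-type summand $\re\!\big(z/(z-\alpha_j)\big)$ strictly positive, which upgrades the winding number into an exact, one-to-one correspondence with the real zeros in $(-1,1)$. An alternative route would study the palindromic polynomial $z^n p(z)+z^n p(1/z)=2z^n\,P\!\big(\tfrac12(z+z^{-1})\big)$ and show directly via self-inversive polynomial theory that its zeros lie on $|z|=1$; I expect the same positivity estimate to be the decisive ingredient there as well.
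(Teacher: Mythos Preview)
The paper does not prove this theorem at all: it is quoted from Iserles and Saff~\cite{Iserles_and_Saff} in the final ``Open Problems'' section purely as motivation for Problem~4.6, with no argument supplied. So there is no in-paper proof to compare your proposal against.

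That said, your argument is sound and is essentially the classical one. The identification $\sum_k q_kT_k(\cos\theta)=\re p(e^{i\theta})$ is correct, and the monotonicity computation
\[
\frac{d}{d\theta}\arg p(e^{i\theta})=\sum_{j=1}^n\re\frac{z}{z-\alpha_j}=\sum_{j=1}^n\frac{1-\re(\alpha_j\bar z)}{|1-\alpha_j\bar z|^2}>0
\]
is exactly right and is the heart of the matter. The symmetry $\psi'(2\pi-\theta)=\psi'(\theta)$ (from $p(\bar z)=\overline{p(z)}$) cleanly gives the half-period increment $\pi n$, and since $p(\pm1)\in\R\setminus\{0\}$ the endpoint arguments are integer multiples of $\pi$, so the level-crossing count yields exactly $n$ simple zeros in $(0,\pi)$. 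Your self-inversive reformulation $z^n p(z)+z^n p(1/z)=2z^n\sum_k q_kT_k\!\big(\tfrac{z+z^{-1}}{2}\big)$ is also correct and leads to the same conclusion via Cohn--Schur type criteria; as you anticipate, the decisive ingredient is again that all zeros of $p$ lie strictly inside the disk.
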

\noindent The same is true for the Chebyshev polynomials of the second kind
$U_n(x)$.

\bigskip
\begin{theorem}\cite[Thm.~1.2, p.~2]{Chasse}
If $f(x) = \sum_{k=0}^n a_k x^k$ has all of its zeros in the interval
$(-1,1)$, then $T[f(x)] = \sum_{k=0}^\infty a_k P_k(x)$ also has all of its
zeros in the interval $(-1,1)$, where $P_k(x)$ is the $kth$ Legendre
Polynomial.
\end{theorem}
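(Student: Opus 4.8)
The operator in question is the linear map $T$ determined by $T[x^k]=P_k(x)$, so my plan is to obtain a usable closed form for $T[f]$ by inserting an integral representation of the Legendre polynomials and exchanging it with the finite sum. Using the Laplace representation $P_k(x)=\frac1\pi\int_0^\pi\bigl(x+\sqrt{x^2-1}\cos\phi\bigr)^k\,d\phi$ together with linearity, I would first record
\[
T[f](x)=\frac1\pi\int_0^\pi f\bigl(x+\sqrt{x^2-1}\cos\phi\bigr)\,d\phi .
\]
Since $P_n$ has nonzero leading coefficient and $a_n\neq0$, the polynomial $T[f]$ has degree exactly $n$; hence it has $n$ zeros counted with multiplicity, and the theorem is equivalent to the assertion that all of them are real and lie in $(-1,1)$.

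Setting $x=\cos\theta$ turns this into a counting statement on $0<\theta<\pi$: writing $g(\theta)=T[f](\cos\theta)$, the claim becomes that $g$ has $n$ zeros in $(0,\pi)$, since $\cos$ is injective there and a degree-$n$ polynomial has at most $n$ zeros. Here it is cleaner to pass from the Laplace integral to the Mehler--Dirichlet representation, which yields
\[
g(\theta)=\frac{\sqrt2}{\pi}\int_0^\theta\frac{F(t)}{\sqrt{\cos t-\cos\theta}}\,dt,\qquad F(t)=\sum_{k=0}^n a_k\cos\!\bigl((k+\tfrac12)t\bigr)=\re\!\bigl(e^{it/2}f(e^{it})\bigr).
\]

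The next step is to count the sign changes of $F$ on $(0,\pi)$, and this is where the hypothesis on $f$ enters. Writing $f(z)=a_n\prod_{j=1}^n(z-r_j)$ with each $r_j\in(-1,1)$, I would track the argument of $e^{it/2}f(e^{it})$ as $t$ runs from $0$ to $\pi$: the point $e^{it}$ traverses the upper unit semicircle, so viewed from the interior point $r_j$ each factor $e^{it}-r_j$ turns its argument strictly monotonically from $0$ to $\pi$, and the factor $e^{it/2}$ contributes an extra increase of $\pi/2$. Thus the total phase increases strictly from $0$ to $n\pi+\tfrac\pi2$ while $|f(e^{it})|>0$ throughout, forcing $F(t)=\re\bigl(e^{it/2}f(e^{it})\bigr)$ to vanish transversally exactly $n$ times in $(0,\pi)$.

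It then remains to transfer this exact count through the Mehler--Dirichlet transform $F\mapsto g$, and I expect this to be the main obstacle. The upper bound, that $g$ has at most $n$ zeros, is free because $T[f]$ already has degree $n$; the real content is the lower bound, namely that the singular Abel-type kernel $(\cos t-\cos\theta)^{-1/2}$ does not merge or annihilate sign changes. I would attack this using the variation-diminishing (total positivity) properties of that kernel together with the fact that the transform intertwines the two Chebyshev systems $\{\cos((k+\tfrac12)t)\}_{k=0}^n$ and $\{P_k(\cos\theta)\}_{k=0}^n$ on $(0,\pi)$. The base case $f=x^n$, where $F=\cos((n+\tfrac12)t)$ has exactly $n$ sign changes and $g=P_n(\cos\theta)$ indeed has all $n$ of its zeros in $(0,\pi)$, shows that the count is preserved and should guide the general estimate. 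Making this sign-change transfer fully rigorous through the singular kernel is the delicate part; everything preceding it is bookkeeping with standard integral representations.
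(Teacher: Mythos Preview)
The paper does not prove this theorem at all: it is quoted verbatim from Chasse \cite{Chasse} in the open-problems section purely to motivate a question, with no argument supplied. So there is no ``paper's proof'' to compare your attempt against.

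On the merits of your attempt itself: the reduction via Mehler--Dirichlet to $g(\theta)=\frac{\sqrt2}{\pi}\int_0^\theta \frac{F(t)}{\sqrt{\cos t-\cos\theta}}\,dt$ with $F(t)=\re\!\bigl(e^{it/2}f(e^{it})\bigr)$ is correct, and your phase-winding count showing that $F$ has exactly $n$ simple zeros in $(0,\pi)$ is clean and correct (note also $F(\pi)=\re\bigl(i f(-1)\bigr)=0$, but that endpoint zero is outside the open interval, as you implicitly use). The acknowledged gap --- pushing the zero count through the singular kernel --- is indeed the entire content of the theorem. Be careful with the tool you propose: a variation-diminishing (totally positive) kernel gives an \emph{upper} bound on the sign changes of the output in terms of the input, which is the wrong direction here, since you already have the upper bound for free from $\deg T[f]=n$ and need a \emph{lower} bound on the zeros of $g$. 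To make your plan work you would need total positivity of the \emph{inverse} Mehler--Dirichlet (Abel-type) transform, or an alternative argument exploiting that the map intertwines the two Chebyshev systems $\{\cos((k+\tfrac12)t)\}_{k\le n}$ and $\{P_k(\cos\theta)\}_{k\le n}$; either route is workable but neither is the one-liner your sketch suggests.
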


\noindent With these theorems in place, we also present the following
problem.
\begin{problem}
Does the interval on which the moment functional for an OPS
$\{P_n(x)\}_{n=0}^{\infty}$ is defined relate to the real-root preserving
property of the transformation $T[x^n]=P_n(x)$ in a meaningful way?
\end{problem}

\section{Acknowledgments}
We would like to acknowledge Theodore Chihara for his book on orthogonal
polynomials. Studying the concepts in his book provided a strong framework
for us to consider representations of orthogonal polynomials in this paper.
In addition, we would like to acknowledge Andrzej Piotrowski for his work in
2007. Many of the ideas and concepts of this paper rely heavily on the ideas
given in his PhD thesis. We would also like to acknowledge Julius Borcea and
Petter Br\"and\'en for their highly insightful work in classifying all linear
real-root preservers. Their work has also contributed to the motivation of
this paper.


\bibliographystyle{amsplain}

\end{document}